\newtheorem{theorem}{Theorem}[section]
\newtheorem{proposition}[theorem]{Proposition}
\newtheorem{lemma}[theorem]{Lemma}
\def\C{{\mbox{\rm\kern.24em
\vrule width.03em height1.43ex depth-.052ex \kern-.26em C}}}
\def\QSet{\mbox{\rm\kern.24em
\vrule width.03em height1.48ex depth-.051ex \kern-.26em Q}}
\def\Z{{\mbox{\rm\kern.25em
\vrule width.03em height0.57ex depth0ex
\kern.033em
\vrule width.03em height1.52ex depth-0.96ex \kern-.338em Z}}}
\def\N{{{\mbox{\rm I\kern-.2em N}}_0}}
\def\M{{{\mbox{\rm I\kern-.2em M}}}}
\def\PSet{\mbox{\rm I\kern-.22em P}}
\def\R{{\mbox{\rm I\kern-.22em R}}}
\def\I{{\bf I}}
\def\I{{\bf I}}
\def\<{\left<}
\def\>{\right>}
\newcommand {\BBC}{\mathbb{C}}
\newcommand {\diam}{\mathop{\mathrm{diameter}}}
\newcommand {\rea}{\mathbb{R}}
\newcommand {\dis}{\mathop{\mathrm{dist}}}
\newcommand {\calK}{\mathcal{K}}
\newcommand {\calB}{\mathcal{B}}
\newcommand {\calV}{\mathcal{V}}
\newcommand {\calM}{\mathcal{M}}
\title{A Calder\'on Zygmund decomposition for multiple frequencies and
an application to an extension of a Lemma of Bourgain.}
\author
[F. Nazarov, R. Oberlin, C. Thiele]
{Fedor Nazarov \ \ \ Richard Oberlin \ \ \ Christoph Thiele }
\address{F. Nazarov, Department of Mathematics,
University of Wisconsin, Madison, 
480 Lincoln Drive, Madison, WI 53706, USA}
\email{nazarov@math.wisc.edu}
\address{R. Oberlin, Department of Mathematics,
UCLA, Los Angeles, CA 90095-1555, USA}
\email{oberlin@math.ucla.edu}
\address{C. Thiele, Department of Mathematics,
UCLA, Los Angeles, CA 90095-1555, USA}
\email{thiele@math.ucla.edu}
\thanks{
F.N. partially supported by NSF grant DMS 0800243.
R.O. partially supported by NSF VIGRE grant
DMS 0502315.
C.Th. partially supported by NSF grant DMS 0701302.}
\date{\today}
\begin{document}

\maketitle

\begin{abstract}
We introduce a Calder\'on Zygmund decomposition such that the bad function
has vanishing integral against a number of pure frequencies.  Then we prove
a variation norm variant of a maximal inequality for several frequencies
due to Bourgain. To obtain the full range of $L^p$ estimates we apply the
multi frequency Calder\'on Zygmund decomposition.
\end{abstract}

\section{Introduction}

The Calder\'on Zygmund decomposition is a technique to extend
bounds for operators $T$ acting on some $L^p$ space to bounds of $T$ acting on 
$L^q$ spaces with lower exponent $1<q<p$. In the most basic example one decomposes 
a function $f\in L^1(\R)$ as the sum of a good function $g$ and a bad function $b$, where 
the good function is in  $L^p(\R)$ and we can apply 
the known bounds, while the bad function $b$ is the sum of localized functions 
$b=\sum_{I\in \I} b_I$ parameterized by a collection $\I$ of disjoint intervals of controlled length 
such that each $b_I$ is supported on the interval $I$ and satisfies the cancellation condition 
\begin{equation}\label{cancellation}
\int b_I(x)\, dx =0\ \ \ .
\end{equation}
The crucial point is that one can use the cancellation condition (\ref{cancellation}) to obtain good
estimates for $T(b_I)$ away from the interval $I$.

In this paper we propose a variant of the Calder\'on Zygmund decomposition, where the mean zero
condition is replaced by a collection of conditions (\ref{meanzero})
for a number of frequencies $\xi_1,\dots,\xi_N$.  Estimates on the good and bad function 
depend on the number $N$ of frequencies, and good control on the 
$N$- dependence is the essence of the matter.

\begin{theorem}\label{cztheorem}
There is a universal constant $C$ such that the following holds.
Let $\xi_1<\dots<\xi_N$ be arbitrary real numbers for some $N\ge 1$. Let $f\in L^1(\R)$
and let $\lambda>0$. Then there is a decomposition 
$$f=g+\sum_{I\in \I} b_I$$ 
for some disjoint collection $\I$ of intervals with
$$\sum_{I\in \I} |I|\le C N^{1/2} \|f\|_1\lambda^{-1}$$ 
such that for each $I\in \I$ and $1\le j\le N$ we have the following, where $f_I$
is the product of the function $f$ with the characteristic function of the interval $I$:
\begin{equation}\label{gbound}
\|g\|_2^2\le C \|f\|_1 N^{1/2} \lambda 
\end{equation}
\begin{equation}\label{fibound}
\|f_I\|_1\le C|I|\lambda
\end{equation}
\begin{equation}\label{fibibound}
\|f_I - b_I\|_{2} \le C|I|^{1/2} \lambda N^{1/2}
\end{equation}
\begin{equation}\label{meanzero}
\int b_I(x) e^{i\xi_j x}\, dx=0
\end{equation}
and the support of $b_I$ is $3I$, the interval with the same center as $I$ and three times
the length.  
\end{theorem}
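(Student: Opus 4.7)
The plan is to mimic the classical Calder\'on--Zygmund construction, with the role of cancellation against constants taken over by cancellation against the $N$-dimensional subspace
\[
V_I := \mathrm{span}\{e^{i\xi_j x}\chi_{3I} : 1 \le j \le N\} \subset L^2(\R).
\]
First I would run the standard Calder\'on--Zygmund stopping at level $\lambda$ to produce a collection $\I_0$ of disjoint maximal dyadic intervals on which $|I|^{-1}\int_I |f| > \lambda$; this gives $\sum_{I \in \I_0}|I| \le \|f\|_1/\lambda$ and $\|f_I\|_1 \le 2|I|\lambda$, confirming (\ref{fibound}). On each $I$ I define $b_I := f_I - P_I f_I$, where $P_I$ denotes orthogonal projection onto $V_I$ in $L^2(\R)$. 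Then $\supp b_I \subseteq 3I$, and the orthogonality $b_I \perp V_I$ is precisely (\ref{meanzero}).

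The key technical step is the $L^2$ estimate (\ref{fibibound}) on $P_I f_I = f_I - b_I$. Writing $P_I f_I = \sum_j c_j e^{i\xi_j x}\chi_{3I}$, the coefficients satisfy a linear system $Mc = b$, where $M_{jk}=\int_{3I}e^{i(\xi_k-\xi_j)x}\,dx$ is the Gram matrix of $\{e^{i\xi_j x}\chi_{3I}\}$ and $b_j = \int_I f(x)e^{-i\xi_j x}\,dx$, so
\[
\|P_I f_I\|_2^2 = \langle M^{-1}b,b\rangle.
\]
When the $\xi_j$ are well separated on the scale $|I|^{-1}$, the smallest eigenvalue of $M$ is at least $c|I|$, and the bound $|b_j| \le \|f_I\|_1 \le 2|I|\lambda$ together with $\|b\|_2^2 \le N\|f_I\|_1^2$ then yields $\|P_I f_I\|_2^2 \le C|I|^{-1} \cdot N(|I|\lambda)^2 = CN|I|\lambda^2$, which is (\ref{fibibound}).

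For the good-function estimate (\ref{gbound}) I would decompose $g = f\chi_{(\cup I)^c} + \sum_I P_I f_I$. The first term contributes at most $\lambda\|f\|_1$ to $\|g\|_2^2$ since $|f|\le\lambda$ off the stopping set. For $\sum_I P_I f_I$, the naive triangle inequality gives only bounds of order $N\lambda\|f\|_1$; to produce the $N^{1/2}$ factor one must use the reproducing kernel identity $\|P_I f_I\|_2^2 = \langle P_I f_I, f_I\rangle \le \|P_I f_I\|_\infty \|f_I\|_1$ together with sharp pointwise control on $P_I f_I$ (reflecting the distribution of the $\xi_j$ relative to $|I|^{-1}$) and the telescoping identity $\sum_I \|f_I\|_1 \le \|f\|_1$.

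The main obstacle is that the Gram matrix $M$ can be extremely ill-conditioned whenever some $\xi_j, \xi_k$ are close on the scale $|I|^{-1}$, breaking both the bound on $\langle M^{-1}b,b\rangle$ and the $L^\infty$ control needed above. To handle this I would refine the stopping family: any interval on which the required bound on $P_I f_I$ fails gets split further, and the argument is iterated until every surviving interval is ``good''. Controlling the total length of the refined collection by $CN^{1/2}\|f\|_1/\lambda$ amounts to careful bookkeeping of how much new mass is introduced at each refinement step, and is the genuinely novel point that departs from the classical Calder\'on--Zygmund decomposition.
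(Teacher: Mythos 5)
Your overall architecture — project $f_I$ onto the span of the exponentials on $3I$, subtract to get $b_I$, recombine — is the same as the paper's, but there are two gaps where you have not found the right idea, and these are in fact the two things that make the theorem work.

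First, the paper does \emph{not} stop at level $\lambda$ as you do; it stops at level $\lambda N^{-1/2}$ (it takes $E = \{\mathcal{M}f > \lambda N^{-1/2}\}$). This single change is where the advertised $N^{1/2}$ in $\sum_{I}|I| \le CN^{1/2}\|f\|_1\lambda^{-1}$ actually comes from — it is not produced by any "refinement" of a $\lambda$-level decomposition. With the threshold $\lambda N^{-1/2}$ one gets $\|f_I\|_1 \le C|I|\lambda N^{-1/2}$, and then the projection bound yields $\|f_I-b_I\|_2 \le C|I|^{1/2}\lambda$ and $\|g\|_2^2 \le CN^{1/2}\lambda\|f\|_1$. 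Your version, with the standard threshold $\lambda$, gives $\sum_I|I|\le\|f\|_1/\lambda$ (no $N^{1/2}$), and the best you can get out of $\sum_I\|P_If_I\|_2^2 \le N\sum_I|I|^{-1}\|f_I\|_1^2 \le N\cdot(\sup_I|I|^{-1}\|f_I\|_1)\cdot\sum_I\|f_I\|_1$ is $N\lambda\|f\|_1$, a full factor $N^{1/2}$ too large. Your proposed workaround — ``sharp pointwise control on $P_I f_I$ reflecting the distribution of the $\xi_j$'' — would not close this gap, because the loss has nothing to do with the distribution of frequencies; it comes from the mismatch between the stopping level and the dimension count.

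Second, and independently, the bound $\|P_I f_I\|_2 \lesssim N^{1/2}|I|^{-1/2}\|f_I\|_1$ must hold with \emph{no} separation hypothesis on $\{\xi_j\}$. Your route through $\langle M^{-1}b,b\rangle$ and the smallest eigenvalue of the Gram matrix visibly breaks when the $\xi_j$ cluster, and you flag this as ``the main obstacle'' but leave the fix as unspecified ``refinement/bookkeeping.'' The paper's key ingredient is the Borwein--Erd\'elyi Nikolskii-type inequality: for any $v$ in $\mathrm{span}\{e^{i\xi_j x}\}\subset L^2(3I)$,
\begin{equation*}
\|v\|_{L^\infty(I)} \le N^{1/2}|I|^{-1/2}\|v\|_{L^2(3I)},
\end{equation*}
with a constant independent of both the frequencies and their spacing. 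The proof is short: pick an $L^2(I)$-orthonormal basis $v_1,\dots,v_N$ of the span, note $\int_I\sum_j|v_j|^2 = N$ so there exists $x_0\in I$ with $|I|\sum_j|v_j(x_0)|^2\le N$, apply Cauchy--Schwarz to bound $|v(x_0)|$, and translate (the span is translation-invariant) to handle a general point of $I$. This completely sidesteps the Gram matrix and its conditioning. Combined with Riesz representation (dually, $\|P_If_I\|_{L^2(3I)} = \sup_{\|v\|_{L^2(3I)}=1}|\langle f_I,v\rangle| \le \|v\|_{L^\infty(I)}\|f_I\|_1$), this gives the needed $L^2$ control on $f_I-b_I$ without any case analysis on the frequency configuration. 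Without this inequality (or some genuine substitute for it), your proof does not go through in the general case that the theorem claims.
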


The exponents of $N$ in this theorem are optimal.  The condition (\ref{meanzero})
means that the functions $f_I$ and $f_I-b_I$ induce the same linear functional on the subspace 
$H$ of $L^2(I)$ spanned  by the functions $e^{i\xi_jx}$. The Riesz representation theorem then
provides the optimal choice of $f_I-b_I$ as an element in $H$. The desired bounds
for $f_I-b_I$ follow from an elegant estimate by Borwein and Erdelyi \cite{borwein06nti}. 
A different approach to proving these bounds is to find a perturbation of the inner product of 
$H$ which permits an orthonormal basis consisting of functions with universally bounded 
$L^\infty$ norm, independent of $N$. We are able to construct such a basis in the well separated 
case that is thoroughly discussed in \cite{demeter09osm}, namely
$\xi_j-\xi_{j-1}>|I|^{-1}$ for all $j$, 
and in the well localized case when $\xi_N-\xi_1\le C_\epsilon N^{1-\epsilon}|I|^{-1}$.
However, we do not know a construction for such a basis in general, and the strength of the
argument by Borwein and Erdelyi is to circumvent the need for it.

We anticipate this Calder\'on Zygmund decomposition or variants thereof to be applicable in 
an array of problems in time-frequency analysis, where one often needs integral conditions 
such as (\ref{meanzero}) for several frequencies. For example, in \cite{oberlin09w}, two of the 
authors use a simple and explicit discrete variant of this Calder\'on Zygmund  decomposition 
to prove hitherto unknown uniform bounds for a discrete model of the bilinear Hilbert transform. 

In this paper we use Theorem \ref{cztheorem} to prove an extension of a 
multi-frequency maximal inequality of Bourgain
(\cite{bourgain89pet}) that has played a role in time-frequency analysis and in proving pointwise 
convergence results for various ergodic averages. For each dyadic interval 
$$\omega = [2^kn,2^k(n+1))$$
with $k,n\in \Z$ let $\phi_{\omega}$ be a Schwartz function whose Fourier transform $\hat{\phi}_{\omega}$ is supported on $\omega.$  Let $\xi_1<\dots<\xi_N$ be real numbers and denote by $X$ the set $\{\xi_1,\dots,\xi_N\}$.
We are interested in bounds for the vector valued operator 
\[
\Delta_k[f] = \sum_{\substack{|\omega| = 2^k \\ \omega \cap X \neq \emptyset}} f * \phi_{\omega}
\] 
whose vector components are parameterized by the integer $k$.
For an exponent $1<r<\infty$, define the $r$-variation semi-norms of a 
sequence $g_k$ by
\begin{equation} \label{rtildevariationdef}
\|g_k\|_{\tilde{V}^r_k} := \sup_{M, k_0 < \ldots < k_M} \left(\sum_{j = 1}^M |g(k_j) - g(k_{j-1})| ^r\right)^{1/r}
\end{equation}
where the supremum is over all strictly increasing finite sequences $k_j$ of arbitrary finite length $M+1$ and define the variation norm 
\[
\|g_k\|_{V^r_{k}} := \sup_{k} |g_k| + \|g_k\|_{\tilde{V}^r_{k}}\ \ .
\]
When $r=\infty,$ we replace the sum (\ref{rtildevariationdef}) by a supremum  in the usual manner.

It was proven in \cite{demeter08bdr} that for $r>0$ we have
\begin{equation} \label{mml2bound}
\|\Delta_k[f](x)\|_{L^2_x(L^\infty_k)} \leq (1 + \log(N)) N^{\frac{1}{2} - \frac{1}{r}}(D_1 + \sup_{j = 1, \ldots, N}\| \sum_{|\omega| = 2^k}\hat{\phi}_{\omega}(\xi_j)\|_{V^r_k}) \|f\|_{L^2}
\end{equation}
with the convention
\[
D_M : = \sup_{\omega,x} |\omega|^{M}|\hat{\phi}_{\omega}^{(M)}(x)| 
\]
for any integer $M \geq 0$ where the supremum is over all dyadic intervals $\omega$, real numbers $x$, 
and where $\hat{\phi}_\omega^{(M)}$ is the $M$'th derivative of $\hat{\phi}_{\omega}$.

This is a weighted version of the above mentioned bound of Bourgain's originating in \cite{bourgain89pet}. Our aim is to strengthen (\ref{mml2bound}) in two directions. First, we would like to replace $L^2$ by $L^p$ for $1 \leq p < 2$; this is the final step of the proof of the $L^p$ return times theorem initiated in \cite{demeter09osm}, \cite{demeter09irt}. Note that \cite{demeter09osm} proves such an extension in the case of separated frequencies; it also suggests a line of reasoning for the general case, however we have been unable to complete the general case without the use of the multi-frequency Calder\'on Zygmund condition. Second we would like to replace the $L^\infty_k$ norm by the stronger $q$-variation norm. Specifically, we will show:

\begin{theorem} \label{vmt}
Suppose $1 < p \leq 2 < r < q$. Then, there exists an $M$ depending only on $q$ and $r$ such that 
\begin{multline*}
\|\Delta_k[f](x)\|_{L^p_x(V^q_k)} \leq \\ C_{p,q,r} (1 + \log(N))N^{(\frac{1}{2} - \frac{1}{r})\frac{q}{q-2} + \frac{1}{p} - \frac{1}{2}}(D_{M} + \sup_{j = 1, \ldots, N}\| \sum_{|\omega| = 2^k}\hat{\phi}_{\omega}(\xi_j)\|_{V^r_k}) \|f\|_{L^p}.
\end{multline*} 
\end{theorem}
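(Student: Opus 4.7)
The plan splits naturally into two stages: first establish the $p=2$ case of the theorem, then descend to $1<p<2$ via the multi-frequency Calder\'on--Zygmund decomposition of Theorem~\ref{cztheorem}. Write $A:=(1/2-1/r)q/(q-2)$ for the target power of $N$ at $p=2$.

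\emph{Stage 1: the $L^2$ endpoint.} The only new feature at $p=2$ relative to (\ref{mml2bound}) is the upgrade of the $L^\infty_k$ norm to $V^q_k$. I would obtain this through the pointwise interpolation inequality
\[
\|g_k\|_{V^q_k}\leq C\|g_k\|_{L^\infty_k}^{1-r/q}\|g_k\|_{V^r_k}^{r/q}
\]
combined with H\"older on the $L^2_x$ integral. The $L^\infty_k$ endpoint is provided by (\ref{mml2bound}), and the $V^r_k$ endpoint is produced by a Littlewood--Paley-type decomposition around each $\xi_j$ followed by a sum in $j$, using the hypothesis on $\|\sum_{|\omega|=2^k}\hat\phi_\omega(\xi_j)\|_{V^r_k}$ for each $j$. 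The factor $q/(q-2)$ in $A$ emerges from this interpolation.

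\emph{Stage 2: from $L^2$ to $L^p$.} Given $f\in L^p$ and $\lambda>0$, I would apply Theorem~\ref{cztheorem} (to the level-set truncation $f\cdot\mathbf{1}_{|f|>\lambda}$ if $f\notin L^1$) to obtain a decomposition $f=g+\sum_I b_I$. For the good piece, (\ref{gbound}) combined with the Stage~1 bound and Chebyshev's inequality yields a level-set estimate. For the bad piece, fix $\omega$ with $|\omega|=2^k$ and pick $\xi_j\in\omega\cap X$; the identity
\[
(\phi_\omega\ast b_I)(x) = e^{-i\xi_j x}(\psi_\omega\ast \tilde b_I)(x), \qquad \psi_\omega(z):=e^{i\xi_j z}\phi_\omega(z),\ \tilde b_I(y):=e^{i\xi_j y}b_I(y)
\]
together with (\ref{meanzero}) (which says $\tilde b_I$ has mean zero on $3I$) and the Schwartz-type localization of $\psi_\omega$ at spatial scale $2^{-k}$, with derivatives controlled by the $D_M$, gives pointwise decay of $\phi_\omega\ast b_I(x)$ in $\dist(x,I)^{-1}$ to arbitrarily high power. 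Summing over $\omega$ at scale $2^k$, taking the $V^q_k$ seminorm in $k$, integrating out of $3I$, and summing over $I$ using $\sum|I|\leq CN^{1/2}\|f\|_1/\lambda$ produces the complementary level-set bound. Marcinkiewicz interpolation between this weak-type bound and the Stage~1 strong $L^2$ bound then delivers the advertised $L^p\to L^p(V^q_k)$ estimate.

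\textbf{Main obstacle.} The principal difficulty is matching the power of $N$. The strong $L^2$ bound contributes $N^A$; $\sum|I|$ in the CZ decomposition costs an additional $N^{1/2}$, consistent with the loss $1/p-1/2$ in going from $p=2$ down to $p=1$. However, each scale $k$ features up to $N$ intervals $\omega$, and $\|b_I\|_1\leq CN^{1/2}|I|\lambda$ carries yet another $N^{1/2}$. Avoiding a cruder final power requires exploiting (i) the restriction of relevant $\omega$ at scale $2^k$ to $O(1+2^k(\xi_N-\xi_1))$ per scale, (ii) the $\ell^q$- rather than $\ell^1$-character of the $V^q$-variation in $k$, and (iii) the arbitrarily fast Schwartz-type decay from large $M$. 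Balancing these three effects to reach the sharp exponent $A+1/p-1/2$ is the heart of the argument.
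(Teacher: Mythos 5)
Your Stage 2 outline is directionally consistent with the paper (CZ decomposition, good piece via the $L^2$ bound and Chebyshev, bad piece via cancellation, Marcinkiewicz interpolation), but both stages have genuine gaps.

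\textbf{Stage 1: the interpolation does not produce the right exponent.} The pointwise inequality
$\|g_k\|_{\tilde V^q_k}\lesssim \|g_k\|_{L^\infty_k}^{1-r/q}\|g_k\|_{\tilde V^r_k}^{r/q}$
is valid, but plugging in the best plausible endpoints does not give $N^{(\frac12-\frac1r)\frac{q}{q-2}}$. The $L^\infty_k$ endpoint from (\ref{mml2bound}) carries $N^{\frac12-\frac1r}$, and the $V^r_k$ endpoint is at best $N^{1/2}$ (this is forced: letting $q\to r$ in the claimed $V^q$ bound gives $(\frac12-\frac1r)\frac{r}{r-2}=\frac12$, and it is also what Cauchy--Schwarz over the $N$ frequencies plus L\'epingle for each $j$ yields). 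The interpolated exponent is then
$(\tfrac12-\tfrac1r)(1-\tfrac rq)+\tfrac12\cdot\tfrac rq=(\tfrac12-\tfrac1r)+\tfrac1q$,
which is strictly larger than $(\frac12-\frac1r)\frac{q}{q-2}=(\frac12-\frac1r)+\frac{r-2}{r(q-2)}$ whenever $r<q$ (since $\frac1q>\frac{r-2}{r(q-2)}\iff r(q-2)>q(r-2)\iff r<q$). So the claim that ``the factor $q/(q-2)$ emerges from this interpolation'' is false: a black-box norm interpolation cannot reach the sharp power. The paper's Lemma~\ref{vets} instead works directly with the exponential sum, covering the sequence $c_k$ by $\lambda$-jumps at every dyadic level, decomposing via a ``parent'' map, and then for each jump scale balancing two competing estimates --- one using $L^2$-orthogonality of the exponentials ($\ell^q\le\ell^2$, giving $(2^n\lambda_0)^{1-r/2}\|c_k\|_{V^r}^{r/2}$) and one using the crude $L^\infty\le N^{1/2}\ell^2$ bound ($N^{1/2}(2^n\lambda_0)^{1-r/q}\|c_k\|_{V^r}^{r/q}$). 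The crossover of these two gives exactly $N^{(\frac12-\frac1r)\frac{q}{q-2}}$. This structural use of orthogonality of the $e^{2\pi i\xi_j y}$ inside the jump decomposition is the missing ingredient; it is not recoverable from a norm-level interpolation. You also omit the Rademacher--Menshov step that passes from the frequency-separated bound to the general one and accounts for the $1+\log N$ factor.

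\textbf{Stage 2: the bad-piece estimate is not closed.} You correctly flag the obstacle ($\|b_I\|_1\lesssim N^{1/2}|I|\lambda$ plus up to $N$ intervals $\omega$ per scale), but items (i)--(iii) do not close it, and you say as much. The paper's resolution is a further decomposition \emph{inside} the bad piece: write $b_I=f_I-g_I$ and treat the two terms by entirely different mechanisms. For $f_I$ one uses $\|f_I\|_1\lesssim |I|\lambda$ from (\ref{fibound}) --- note this has \emph{no} $N^{1/2}$ loss --- together with disjointness of the Fourier supports of the $\phi_\omega$. For $g_I$, which is only controlled in $L^2$ via (\ref{fibibound}), one uses the cancellation (\ref{meanzero}) to rewrite $\phi_\omega*b_I=T_\omega[f_I]-T_\omega[g_I]$ with mean-zero-kernel operators $T_\omega$, and then bounds $\|\sum_\omega T_\omega\|_{L^2\to L^2}$ by a Cotlar--Stein/Schur-test almost-orthogonality argument in $\omega$, exploiting the $1$-separation of the $\xi_\omega$. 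On top of this there is a scale split at $2^k\sim N^\epsilon$ (with $\epsilon$ tied to $q,r$) and a near/far split around $I$; the ``far'' and ``large scale'' parts are the only places the crude Schwartz decay in $D_M$ is used. Without the $f_I$/$g_I$ split and the almost-orthogonality estimate, a direct estimate built from $\|b_I\|_1$ really does lose an extra $N^{1/2}$, so your proposal as written would only reach $N^{(\frac12-\frac1r)\frac{q}{q-2}+1}$ at the weak $(1,1)$ endpoint rather than $N^{(\frac12-\frac1r)\frac{q}{q-2}+\frac12}$.
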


In applications, for each $q$ one takes $r$ near $2$ to obtain exponents arbitrarily close to $N^{\frac{1}{p} - \frac{1}{2}}.$ We expect the full strength of the variational estimate to be used in forthcoming
work by the second author.

We will start with the short proof of Theorem \ref{cztheorem} in Section \ref{czproof}.
We then prove Theorem \ref{vmt} for the exponent $p=2$ in Sections \ref{trigsumsection} and 
$\ref{rtvvfsection}$; the main ingredient necessary here to improve (\ref{mml2bound}) to a variational bound is an estimate for exponential sums proven in Section \ref{trigsumsection}. In Section \ref{czsection}, we extend the bound to cover exponents $1 < p < 2$ by proving a weak-type estimate at $p=1;$ the main ingredient 
here is the Calder\'on Zygmund decomposition of Theorem \ref{cztheorem}.

\section{A multiple frequency Calder\'{o}n Zygmund decomposition} \label{czproof}

\begin{proof}[Proof of Theorem \ref{cztheorem}.]

Let $f\in L^1(\R)$. Consider the set
$$E=\{x:\calM[f](x)>\lambda N^{-1/2}\}$$
where $\calM$ is the Hardy Littlewood maximal operator. By the Hardy Littlewood maximal theorem we have
$$|E|\le C N^{1/2} \|f\|_{L^1} \lambda^{-1}\ \ .$$
Let $\I$ be the collection of maximal dyadic intervals contained in $E$ such
that $6I$ is also contained in $E$ (here and in the rest of the proof, $CI$ denotes the dilate of $I$ with respect to the center of $I$). Clearly the collection $\I$ covers $E$,
and the collection of intervals $3I$ has bounded overlap.
 
Consider the finite dimensional subspace 
\begin{equation}\label{span}{\rm span}\{e^{i\xi_j x}: 1\le j\le N\}
\end{equation}
of the Hilbert space $L^2(3I)$. For each element $v$ in this space, 
Borwein and Erdelyi prove in \cite{borwein06nti} the estimate
\begin{equation}\label{beestimate}
\|v\|_{L^\infty(I)}\le N^{1/2} |I|^{-1/2} \|v\|_{L^2(3I)}\ \ .
\end{equation}
For the convenience of the reader, we sketch the elegant proof in \cite{borwein06nti}.
Let $v_1,\dots,v_N$ be an orthonormal basis of the space (\ref{span})
considered as subspace of $L^2(I)$.
Since
$$\int_{I} \sum_{j=1}^N |v_j(x)|^2\, dx = N$$
there exists a point $x_0\in I$ such that 
$$|I| \sum_{j=1}^N |v_j(x_0)|^2\le N \ \ .$$
Hence, for every element in (\ref{span}),
$$|v(x_0)|\le \sum_{j=1}^N |\<v,v_j\> v_j(x_0)|$$
$$\le (\sum_{j=1}^N |\<v,v_j\>|^2)^{1/2} (\sum_{j=1}^N |v_j(x_0)|^2)^{1/2}\le N^{1/2} |I|^{-1/2} \|v\|_{L^2(I)}\ \ .$$
To estimate $v$ at a general point $x_1\in I$, we apply this estimate
to 
$$\tilde{v}(x)=v(x+x_0-x_1)$$ 
which is also in the space (\ref{span}) 
and thus obtain (\ref{beestimate}).

Estimate (\ref{beestimate}) implies that the function $f_I$ defines 
linear functional on the subspace (\ref{span}) of $L^2(3I)$ with norm 
bounded by $\|f_I\|_1$.
By the Riesz representation theorem, there is an element $g_I$ in this subspace such that
\[
\int f_I(y) e^{2\pi i \xi_j y}\ dy = \int_{3I} g_I(y) e^{2\pi i \xi_j y}\ dy
\]
and such that 
\[
\|g_I\|_{L^2(3I)} \leq N^{1/2} |I|^{-1/2} \|f\|_{L^1(I)}
\]
We extend $g_I$ to a function on all of $\R$ by setting it equal to $0$ outside $3I$.

For each $I\in \I$, consider the restriction $f_I$ of $f$ to $I$ and observe
that by looking at the maximal function on $12I$  we have
$$\|f_I\|_{L^1}\le 24|I| \lambda N^{-1/2}.$$

Define 
\begin{align*}
b_I &= f_I-g_I \\
b &=\sum_{\I} b_I \\
g &= f- b
\end{align*}
Observe that $b$ is supported on the set $E$.

The functions $g_I$ have bounded overlap, hence
\begin{align*}
\|g\|_{L^2}^2 &\le \int_{E^c} |f(x)|^2\, dx+ \int (\sum_I g_I(x))^2\, dx \\
&\le \int_{E^c} |f(x)| \lambda N^{-1/2} \, dx+ C \sum_I \int g_I(x)^2\, dx \\
&\le \|f\|_{L^1} \lambda N^{-1/2} + C \sum_I |I|\lambda^2 \\
&\le \|f\|_{L^1} \lambda N^{-1/2} + C |E| \lambda^2 \\
&\le C  \|f\|_{L^1} N^{1/2} \lambda .
\end{align*}

\end{proof}

\section{A variational estimate for exponential sums} \label{trigsumsection}
We recall the following lemma which was proven in \cite{demeter08bdr} and was inspired by an argument of Bourgain \cite{bourgain89pet} (See also Proposition 4.2 of \cite{demeter08wmc} which is similar to the lemma in \cite{demeter08bdr}, but given in a purely functional-analytic setting). 

\begin{lemma} \label{mets} Suppose that $\xi_1 < \ldots < \xi_N$ are real numbers, $\{c_k\}_{k=1}^{\infty}$ is a sequence in $\rea^N$, and $r > 2.$ Then
\begin{equation} 
\| \|\sum_{j = 1}^N c_{k,j} e^{2\pi i \xi_j y}\|_{L^{\infty}_k}\|_{L^2_y([0,1])} \leq C  N^{\frac{1}{2} - \frac{1}{r}} \|c_k\|_{V_k^r(l^2(\rea^N))}
\end{equation}
where $C$ may depend on $r$ and $\min_j |\xi_j - \xi_{j-1}|.$
\end{lemma}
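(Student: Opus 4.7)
My plan is to combine two elementary pointwise/orthogonality bounds on the system $\{e^{2\pi i\xi_j y}\}_{j=1}^N$ with a multi-scale chaining argument built from the $V^r$ hypothesis. Write $F_k(y):=\sum_{j=1}^N c_{k,j}e^{2\pi i\xi_j y}$ and $A:=\|c_k\|_{V^r_k(l^2(\rea^N))}$. Cauchy--Schwarz yields the deterministic bound $\|F_k\|_{L^\infty_y}\le N^{1/2}\|c_k\|_{l^2}$. On the other hand the minimum-gap hypothesis makes $\{e^{2\pi i\xi_j y}\}$ a Riesz system in $L^2([0,1])$ (by, e.g., Ingham or Montgomery--Vaughan), so
\[
\|F_k\|_{L^2_y([0,1])}\le C\|c_k\|_{l^2},
\]
with $C$ depending on $\min_j|\xi_j-\xi_{j-1}|$. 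Both estimates extend to differences $F_k-F_{k'}$ with $\|c_k\|_{l^2}$ replaced by $\|c_k-c_{k'}\|_{l^2}$.

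Next, I would build for each integer $l\ge 0$ by a greedy procedure a set $S_l\subset\Z$ of representatives and an assignment $\sigma_l:\Z\to S_l$ such that $\|c_k-c_{\sigma_l(k)}\|_{l^2}\le A\cdot 2^{-l}$. Arranging $S_l\subset S_{l+1}$ and defining $\sigma_l(k)$ from $\sigma_{l+1}(k)$ rather than from $k$ directly, the pair $(\sigma_l(k),\sigma_{l+1}(k))$ takes at most $|S_{l+1}|$ distinct values as $k$ varies. The key counting estimate is $|S_l|\le 2^{lr}+1$: two distinct elements added at level $l$ are separated in $l^2$ by more than $A\cdot 2^{-l}$, so each such consecutive jump contributes at least $(A\cdot 2^{-l})^r$ to the $r$-variation sum, which is at most $A^r$. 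This supplies the telescoping identity
\[
F_k(y)=F_{\sigma_0(k)}(y)+\sum_{l\ge 0}\bigl(F_{\sigma_{l+1}(k)}(y)-F_{\sigma_l(k)}(y)\bigr),
\]
with convergence in $L^2_y$ guaranteed by the decay of $\|c_k-c_{\sigma_l(k)}\|_{l^2}$.

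Taking the supremum in $k$ inside the triangle inequality, it suffices to bound each term $\|\sup_k|F_{\sigma_{l+1}(k)}-F_{\sigma_l(k)}|\|_{L^2_y}$. At level $l$ the inner function ranges over at most $C\cdot 2^{(l+1)r}$ distinct functions, each with coefficient $l^2$-norm $\lesssim A\cdot 2^{-l}$. Summing $L^2_y$ norms gives the first of the two estimates
\[
\|\sup_k|F_{\sigma_{l+1}(k)}-F_{\sigma_l(k)}|\|_{L^2_y}\le C\,A\,2^{l(r/2-1)+r/2},\qquad \|\sup_k|F_{\sigma_{l+1}(k)}-F_{\sigma_l(k)}|\|_{L^2_y}\le C\,A\,N^{1/2}\,2^{-l},
\]
the second being the crude $L^\infty_y$ bound valid for each fixed $k$. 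Since $r>2$ the first is increasing in $l$ and the second is decreasing; applying the first for $l\le l^\ast$ and the second for $l>l^\ast$ with the crossover $l^\ast\sim (\log_2 N)/r$ balances both geometric tails and yields the claimed $N^{1/2-1/r}A$.

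The main obstacle is the bookkeeping of the nested stopping times: one must verify that the greedy sets $S_l$ can be arranged compatibly, that the distinct-values count at each level is indeed $\le C\cdot 2^{(l+1)r}$, and that the telescope converges unconditionally in $L^2_y$. Once this setup is in place, the quantitative heart of the argument is the crossover optimization, which is precisely the mechanism that converts the trivial Cauchy--Schwarz factor $N^{1/2}$ into the sharp exponent $\tfrac12-\tfrac1r$ by trading it against the counting factor $2^{(l+1)r/2}$.
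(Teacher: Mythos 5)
The paper does not actually prove Lemma~\ref{mets}: it quotes it from \cite{demeter08bdr} (and notes the related Proposition~4.2 of \cite{demeter08wmc}). So the only in-paper comparison point is the proof of Lemma~\ref{vets}, which uses Lemma~\ref{mets} as a black box but runs the same kind of chaining argument to handle the $\tilde V^q$ part. Your proposal is correct and is precisely this Bourgain-style chaining argument applied directly to the $L^\infty_k$ case: greedy $\lambda$-coverings at dyadic scales with the counting bound $L_\lambda \lesssim 1 + (A/\lambda)^r$ from the $r$-variation hypothesis; a recursive parent map giving a telescope $F_k = F_{\sigma_0(k)} + \sum_l (F_{\sigma_{l+1}(k)}-F_{\sigma_l(k)})$; an $\ell^2$-of-distinct-values bound using the Riesz-system estimate (the paper's (\ref{orthsums})); the trivial Cauchy--Schwarz $N^{1/2}$ bound; and the crossover at $l^* \sim (\log_2 N)/r$. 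This mirrors the structure of the paper's proof of Lemma~\ref{vets} almost step for step, with the outer $\sup_k$ playing the role there played by the $\ell^q$ sum over distinct values. Two small expository points worth tightening: (i) you don't actually need (or have) $S_l \subset S_{l+1}$; what you need, and what the paper uses, is a recursive \emph{parent} map $\sigma_l = \sigma_l^{\mathrm{greedy}}\circ\sigma_{l+1}$ so that the pair $(\sigma_l(k),\sigma_{l+1}(k))$ is a function of $\sigma_{l+1}(k)$, and then $\|c_k - c_{\sigma_l(k)}\|_{l^2} \le C\,A\,2^{-l}$ only with a harmless constant from the geometric tail; and (ii) the finite-sum/convergence issue is cleanest if one first reduces to finite sequences by monotone convergence, as the paper does in the proof of Lemma~\ref{vets}. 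Neither affects the exponent, and the crossover computation giving $N^{1/2-1/r}$ is correct.
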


Here we have used the obvious extension of the definition of the $r$-variation norm 
to a  function $g$ defined on a subset $\calK$ of $\rea$ which takes values in a Banach space 
$\calB$ as
\begin{equation} \label{rtildebanach}
\|g\|_{\tilde{V}^r_{k \in \calK}(\mathcal{B})} = \sup_{M, k_0 < \ldots < k_M} \left(\sum_{j = 1}^M \|g(k_j) - g(k_{j-1})\|_{\calB}^r\right)^{1/r}
\end{equation}
where the supremum is over all strictly increasing finite sequences in $\calK$, and
\[
\|g\|_{V^r_{k \in \calK}(\calB)} = \sup_{k \in \calK} \|g(k)\|_{\calB} + \|g\|_{\tilde{V}^r_{k \in \calK}(\calB)}.
\]
When $r=\infty,$ we replace the sum (\ref{rtildebanach}) by a supremum  in the usual manner.
When $\calK$ and $\calB$ are supressed, one may usually assume that they are the domain of $g$ and $\BBC$ respectively.

The crucial step towards obtaining bounds for the $V^q$ norm in Theorem \ref{vmt} is to see that Lemma \ref{mets} holds with the $L^\infty$ norm replaced by a $V^q$ norm. We thus want to prove 

\begin{lemma} \label{vets} Suppose that $\xi_1 < \ldots < \xi_N$ are real numbers, $\{c_k\}_{k=1}^{\infty}$ is a sequence in $\rea^N$, and $2 < r < q.$ Then
\begin{equation} \label{vtsbound}
\| \|\sum_{j = 1}^N c_{k,j} e^{2\pi i \xi_j y}\|_{V^q_k(\BBC)}\|_{L^2_y([0,1])} \leq C  N^{\left(\frac{1}{2} - \frac{1}{r}\right)\frac{q}{q-2}}\|c_k\|_{V_k^r(l^2(\rea^N))}
\end{equation}
where $C$ may depend on $r,q$ and $\min_j |\xi_j - \xi_{j-1}|.$
\end{lemma}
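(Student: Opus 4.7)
The plan is to interpolate between Lemma~\ref{mets} (which controls $L^2_y(V^\infty_k)$) and a complementary $V^2_k$-type estimate. The elementary inequality
\[
\sum_i|a_i|^q \leq \bigl(\max_i|a_i|\bigr)^{q-2}\sum_i|a_i|^2,
\]
applied to the increments of any sequence $h_k$ along the partition realizing $\|h_k\|_{V^q_k}$, yields the pointwise bound $\|h_k\|_{V^q_k}^q\leq C\,\|h_k\|_{V^\infty_k}^{q-2}\|h_k\|_{V^2_k}^2$. Taking $L^2_y$-norms and applying H\"older's inequality with conjugate exponents $q/(q-2)$ and $q/2$ then gives
\[
\|g\|_{L^2_y(V^q_k)} \leq C\,\|g\|_{L^2_y(V^\infty_k)}^{(q-2)/q}\,\|g\|_{L^2_y(V^2_k)}^{2/q},
\]
where $g_k(y):=\sum_j c_{k,j}\,e^{2\pi i\xi_j y}$.

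Lemma~\ref{mets} controls the first factor on the right by $CN^{1/2-1/r}\|c\|_{V^r_k(l^2(\rea^N))}$. The remaining task is to establish
\[
\|g\|_{L^2_y(V^2_k)} \leq C\,N^\gamma\,\|c\|_{V^r_k(l^2(\rea^N))}
\]
with $\gamma=(1/2-1/r)\cdot 2(q-1)/(q-2)$, chosen so that the product of the two exponents reproduces the target $(1/2-1/r)\,q/(q-2)$.

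To establish the $V^2_k$ bound I would use a dyadic jump-counting argument. For fixed $y$ and any partition $k_0<\dots<k_M$, Cauchy--Schwarz yields $|g_{k_i}(y)-g_{k_{i-1}}(y)|\leq\sqrt N\,\|c_{k_i}-c_{k_{i-1}}\|_{l^2}$, so the number of indices $i$ with $g$-increment of dyadic size $\sim 2^n$ is at most $N^{r/2}\|c\|_{V^r_k(l^2)}^r/2^{nr}$ by the $V^r$-structure of $c$. The contribution of scale $2^n$ to $\|g_k(y)\|_{V^2_k}^2$ is then at most $2^{2n}$ times this count; summing dyadically in $n$ and truncating the range of scales via the $L^2_y$-maximal bound of Lemma~\ref{mets} on $\sup_k|g_k(y)|$ should deliver the claimed exponent $\gamma$.

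The main obstacle is the $V^2_k$ estimate itself: because the supremum over partitions defining $V^2_k$ sits inside the $L^2_y$-integral, one cannot swap it with the integral to apply Plancherel in $y$ directly on the coefficients. The crude Cauchy--Schwarz jump count alone yields only $\gamma\leq 1/2$, which matches the target exactly when $r=4(q-1)/q$ and suffices only in the range $r\geq 4(q-1)/q$; for $r$ closer to $2$, one has to combine it with the sharper $L^2_y$-maximal bound of Lemma~\ref{mets} at each dyadic scale in order to recover the full exponent $\gamma$ for all admissible $r\in(2,q)$.
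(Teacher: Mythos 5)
Your opening reduction is correct: the pointwise inequality
\[
\|h_k\|_{\tilde V^q_k}^q \leq \|h_k\|_{\tilde V^\infty_k}^{q-2}\|h_k\|_{\tilde V^2_k}^2,
\]
together with H\"older in $y$, would indeed reduce matters to controlling $\|g\|_{L^2_y(V^\infty_k)}$ (handled by Lemma~\ref{mets}) and $\|g\|_{L^2_y(V^2_k)}$. The problem is that the second factor is infinite in general, so the interpolation has nothing to bite on. Already in the degenerate case $N=1$, $\xi_1=0$, one has $g_k(y)=c_k$ and the proposed intermediate estimate reads $\|c_k\|_{V^2_k}\leq C\|c_k\|_{V^r_k}$; but for $r>2$ this is false, since $V^r$ does not embed in $V^2$ (take a bounded oscillating sequence whose $k$-th increment has size $\sim k^{-1/r}$ over $k=1,\dots,M$; its $\tilde V^r$ norm grows like $(\log M)^{1/r}$ while its $\tilde V^2$ norm grows like $M^{(r-2)/(2r)}$). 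For general $N,\xi_j$ the $L^2_y$ average gives no extra decay for this purpose, so the $V^2$ estimate you want cannot hold with any finite constant.

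Your own diagnosis of the difficulty points in the wrong direction. In the dyadic jump count, the contribution at scale $2^n$ is $\lesssim 2^{2n}\cdot N^{r/2}\|c\|^r_{V^r}\,2^{-nr}=N^{r/2}\|c\|^r_{V^r}\,2^{n(2-r)}$, and since $r>2$ this diverges as $n\to-\infty$. The $L^\infty_k$ maximal bound of Lemma~\ref{mets} caps the jump scales from above, which is the side that was already harmless; it does nothing to tame the accumulation of arbitrarily many small increments, which is exactly what makes $V^2$ fail.

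The paper's proof avoids this entirely by never forming a $V^2$ norm. It first decomposes the sequence $c_k$ itself dyadically: a stopping-time "covering by $\lambda$-jumps" at each dyadic threshold $2^n\lambda_0$ yields at most $L_{2^n\lambda_0}\lesssim (\lambda/(2^n\lambda_0))^r+1$ stopping indices, and a monotone parent map $\rho(n,\cdot)$ writes $c_k=c_1+\sum_n (c_{\rho(n,k)}-c_{\rho(n+1,k)})$ with each level-$n$ increment bounded by $2^{n+1}\lambda_0$ and taking at most $L_{2^n\lambda_0}$ distinct values. Each level contributes an $l^q$ sum over $m\leq L_{2^n\lambda_0}$, and for that sum the paper uses the better of two bounds: replacing $l^q$ by $l^2$ and applying the almost-orthogonality estimate (\ref{orthsums}) gives $\lesssim 2^n\lambda_0\,L_{2^n\lambda_0}^{1/2}$, while the crude pointwise Cauchy--Schwarz over $j$ gives $\lesssim N^{1/2}(2^n\lambda_0)^{1-r/q}\|c\|^{r/q}_{V^r}$. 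Choosing the favorable bound on each side of a crossover scale and summing the geometric series gives exactly $N^{(1/2-1/r)q/(q-2)}$. This is, in spirit, an interpolation between your two endpoints, but implemented at the level of the dyadic decomposition rather than at the level of norms, which is what circumvents the fact that the $V^2$ endpoint is unavailable. If you want to repair your argument, that is the move you are missing: decompose the sequence first, then optimize scale by scale.
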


We will require use of the estimate
\begin{equation} \label{orthsums}
\|\sum_{j = 1}^N d_j e^{2\pi i \xi_j y}\|_{L^2_y([0,1])} \leq C \|d_j\|_{l^2_j}
\end{equation}
where the constant depends on $\min_j |\xi_j - \xi_{j-1}|$. 
To see this, estimate the $L^2$ norm on the left hand side by 
the norm $L^2(w)$ for some appropriate smooth weight supported on a larger
interval than $[0,1]$ and use almost orthogonality of the exponential 
functions in the space $L^2(w)$.

\begin{proof}[Proof of Lemma \ref{vets}]
By Lemma \ref{mets}, it suffices to prove (\ref{vtsbound}) with the $\tilde{V}^q$ norm in place of the $V^q$ norm. By a limiting argument, we may also assume that our sequence $\{c_k\}_{k=1}^M$ has finite length, provided that $C$ is independent of $M$. 

For each $\lambda > 0$ we cover $\{c_k\}_{k=1}^M$ with respect to $\lambda$-jumps as follows. Set $l(\lambda,1) = 1.$ Suppose that $l(\lambda,1) < \ldots <  l(\lambda,L)$ have been chosen, and let $B(c_{l(\lambda,L)},\lambda)$ denote the ball of radius $\lambda$ centered at $c_{l(\lambda,L)}.$ If $\{c_k : k > l(\lambda,L)\} \subset B(c_{l(\lambda,L)},\lambda)$ then stop and set $L_\lambda = L$ and $l(\lambda,L+1) = \infty.$ Otherwise, let $l(\lambda,L+1)$ be chosen minimally with $l(\lambda,L+1) > l(\lambda,L)$ and $c_{l(\lambda,L+1)} \notin B(c_{l(\lambda,L)},\lambda).$ This process will stop, yielding some $L_{\lambda} \leq M$. It is clear that 
\begin{equation} \label{mefromvar}
\lambda (L_{\lambda} - 1)^{1/r} \leq \|c_k\|_{V^r_k}\ \ .
\end{equation}

We now define a recursive ``parent'' function based on the covering above. 
Fix some $\lambda_0 < \min\{\|c - c'\|_{l^2(\rea^N)} : c,c' \in \{c_k\}_{k=1}^M \text{\ and } c \neq c' \}.$
For $k=1, \ldots, M$ define $\rho(-1,k) = k$. Once $\rho(n,k)$ has been defined for $n=-1, \ldots, L$ set $\rho(L+1,k) = l(2^{L+1}\lambda_0,m)$ where $m$ is the unique integer satisfying
\[
l(2^{L+1}\lambda_0,m) \leq \rho(L,k) < l(2^{L+1}\lambda_0,m+1).
\]
Notice that we have
\[
\|c_{\rho(n,k)} - c_{\rho(n+1,k)}\|_{l^2(\rea^N)} < 2^{n+1}\lambda_0
\]
and in particular $c_{\rho(0,k)} = c_k.$ Also note that $\rho(n,k) = 1$  whenever $2^n\lambda_0 \geq \diam(\{c_k\}_{k=1}^M).$ Thus
\[
c_k = c_1 + \sum_{n=0}^{\infty} c_{\rho(n,k)} - c_{\rho(n+1,k)}\ \ .
\]
Finally, by induction, one sees that $\rho(n,k)$ is nondecreasing in $k$ for each fixed $n$.

We have
\[
\|\|\sum_{j = 1}^N c_{k,j} e^{2\pi i \xi_j y}\|_{\tilde{V}^q_k(\BBC)}\|_{L^2_y([0,1])}\]
\[ \leq \sum_{n = 0}^\infty \|\|\sum_{j = 1}^N (c_{\rho(n,k),j} - c_{\rho(n+1,k),j}) e^{2\pi i \xi_j y}\|_{\tilde{V}^q_k(\BBC)}\|_{L^2_y([0,1])}\ \ .
\]
Observe that the right hand side above
\[
= \sum_{n : L_{2^n\lambda_0} > 1} \|\|\sum_{j = 1}^N (c_{\rho(n,k),j} - c_{\rho(n+1,k),j}) e^{2\pi i \xi_j y}\|_{\tilde{V}^q_k(\BBC)}\|_{L^2_y([0,1])}.
\]
Using the monotonicity of the $\rho(n,\cdot)$ 
 and the fact that the range of $\rho(n,\cdot)$ is contained in $\{l(2^n\lambda_0,m) : m = 1, \ldots, L_{2^n\lambda_0}\}$ we see that the display above is 
\[
\leq 2 \sum_{n: L_{2^n\lambda_0} > 1}\| \left(\sum_{m = 1}^{L_{2^n\lambda_0}} |\sum_{j=1}^N(c_{l(2^n\lambda_0,m),j} - c_{\tilde{\rho}(l(2^n\lambda_0,m)),j}) e^{2\pi i \xi_j y}|^q \right)^{1/q} \|_{L^2_y([0,1])}
\]
where we let $\tilde{\rho}(l(2^n\lambda_0,m))$ denote $l(2^{n+1}\lambda_0,i)$ where $i$ is the unique integer satisfying
\[
l(2^{n+1}\lambda_0,i) \leq l(2^n\lambda_0,m) < l(2^{n+1}\lambda_0,i + 1).
\]
Estimating $l^q$ by $l^2$, switching the order of integration, and using (\ref{orthsums}), we see that the $n$'th term in the outer sum above is
\[
\leq C 2^{n}\lambda_0 L_{2^n\lambda_0}^{1/2} 
\leq C (2^n\lambda_0)^{1 - \frac{r}{2}} \|c_k\|_{V^r_k(l^2(\rea^N))}^{\frac{r}{2}}.
\]

We can also estimate the $n$'th term by
\begin{multline*}
\| \left(\sum_{m = 1}^{L_{2^n\lambda_0}} (\sum_{j=1}^N|c_{l(2^n\lambda_0,m),j} - c_{\tilde{\rho}(l(2^n\lambda_0,m)),j}|)^q \right)^{1/q} \|_{L^2_y([0,1])} \\\leq N^{1/2} \| \left(\sum_{m = 1}^{L_{2^n\lambda_0}} \|c_{l(2^n\lambda_0,m)} - c_{\tilde{\rho}(l(2^n\lambda_0,m))}\|_{l^2(\rea^N)}^q \right)^{1/q} \|_{L^2_y([0,1])}
\\ \leq N^{1/2} (2^n\lambda_0)^{1 - \frac{r}{q}} \|c_k\|_{V^r_k(l^2(\rea^N))}^{\frac{r}{q}}.
\end{multline*}
Choosing whichever of the two bounds is favorable for each $n$ and summing gives the desired result.
\end{proof}

\section{The $L^2$ bound} \label{rtvvfsection}

Following the method of \cite{demeter08bdr}, our proof of Theorem \ref{vmt} for the exponent $p=2$ has two steps. We first demonstrate the bound under a certain assumption of frequency separation, and then we use a Rademacher-Menshov type argument to leverage the frequency-separated bound to give the general result.

\subsection{Frequency separated case}

We want to show the following

\begin{proposition} \label{frequencyseparatedtheorem}
Suppose that for each $j,$ $\xi_{j+1} > \xi_j + 1$ and that $2 < r < q$. Then
\begin{equation} \label{fsbound}
\|\Delta_k[f](x)\|_{L^2_x(V^q_{k \leq 0})} \leq C_{q,r} N^{(\frac{1}{2} - \frac{1}{r})\frac{q}{q-2}}(D_1 + \sup_{j}\|\sum_{|\omega| = 2^k} \hat{\phi}_{\omega}(\xi_j)\|_{V^r_{k \leq 0}}) \|f\|_{L^2}\ \ . 
\end{equation}
\end{proposition}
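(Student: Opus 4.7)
The plan is to extract from $\Delta_k[f]$ the wave packets at the pure frequencies $\xi_j$, and to apply Lemma \ref{vets} after a Shannon-type sampling reduction. For $k\leq 0$, since $|\omega|=2^k\leq 1$ and $\xi_{j+1}-\xi_j > 1$, each $\xi_j$ lies in a unique dyadic interval $\omega_j(k)$ of length $2^k$ with $\omega_j(k)\subset (\xi_j-1/2,\xi_j+1/2)$, and the $\omega_j(k)$ are disjoint across $j$. Hence $\Delta_k[f] = \sum_j f*\phi_{\omega_j(k)}$, and setting $a_{k,j} := \hat\phi_{\omega_j(k)}(\xi_j) = \sum_{|\omega|=2^k}\hat\phi_\omega(\xi_j)$, the hypothesis says that $\sup_j \|a_{k,j}\|_{V^r_k}$ is bounded by the relevant term in (\ref{fsbound}).

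Fix a smooth bump $\psi$ equal to $1$ on $[-1/2,1/2]$ and supported in $(-1,1)$, and define $G_j(x) := \int \hat f(\xi_j+\tau)\psi(\tau)e^{2\pi i\tau x}\,d\tau$. The wave packets $G_j(x)e^{2\pi i\xi_j x}$ have Fourier supports in $(\xi_j-1,\xi_j+1)$ with bounded overlap in $j$. We split $\Delta_k[f] = M_k + E_k$ with main term $M_k(x) := \sum_j a_{k,j} G_j(x) e^{2\pi i\xi_j x}$. Since each $G_j$ is band-limited, Shannon--Whittaker sampling on a sufficiently fine lattice $\{x_m\}$ with a Schwartz reconstruction kernel $K$ gives $G_j(x) = \sum_m G_j(x_m)K(x-x_m)$, whence
\[ M_k(x) = \sum_m K(x-x_m)\, H_m^k(x),\qquad H_m^k(x) := \sum_j a_{k,j}G_j(x_m) e^{2\pi i\xi_j x}; \]
the crucial point is that $H_m^k$ has coefficients independent of $x$. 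Applying the triangle inequality in $V^q_k$, Cauchy--Schwarz in $m$ against $|K(x-x_m)|$, splitting $\int_\R$ into unit intervals and invoking Lemma \ref{vets} on each (its constant is uniform because $\min_j|\xi_j-\xi_{j-1}|>1$), followed by the estimate
\[ \|(a_{k,j}G_j(x_m))_j\|_{V^r_k(l^2_j)}^2 \leq \Big(\sup_j\|a_{k,j}\|_{V^r_k}\Big)^2 \sum_j|G_j(x_m)|^2 \]
(Minkowski's inequality exchanging the order of $l^2_j$ and $V^r_k$, valid since $r\geq 2$), the sampling-Parseval identity $\sum_m|G_j(x_m)|^2 \lesssim \|G_j\|_{L^2}^2$, and $\sum_j\|G_j\|_{L^2}^2 \lesssim \|f\|_{L^2}^2$ from the bounded overlap of Fourier supports, yields the desired bound for $M_k$ with the sharp $N$-exponent.

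The error $E_k$ has Fourier symbol $\sum_j[\hat\phi_{\omega_j(k)}(\eta) - a_{k,j}\psi(\eta-\xi_j)]$, supported in $\bigcup_j(\xi_j-1,\xi_j+1)$ and pointwise $O(D_1)$: on $\omega_j(k)$ by Taylor expanding $\hat\phi_{\omega_j(k)}$ about $\xi_j$ together with $|\omega|\,|\hat\phi'_\omega|\leq D_1$, and on the complement within $(\xi_j-1,\xi_j+1)$ by $D_0\lesssim D_1$ (since $\hat\phi_\omega$ vanishes at $\partial\omega$, so the mean value theorem applies). Plancherel combined with disjoint supports then gives $\|E_k\|_{L^2}\lesssim D_1\|f\|_{L^2}$ uniformly in $k$. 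The main technical obstacle is the $V^q_k$ upgrade of this estimate: the pointwise $D_1$ bound yields only $L^\infty_k$ control, and obtaining $V^q_k$ without degrading the $N$-exponent requires decomposing the $k$-fluctuation of the symbol into the $V^r_k$-regularity of $a_{k,j}$ (governing the outer tails, where the bracket reduces to $-a_{k,j}\psi(\eta-\xi_j)$) and a $D_1$-driven smoothness control on the shrinking inner region $\omega_j(k)$, before reapplying Lemma \ref{vets} in a form analogous to the main-term argument.
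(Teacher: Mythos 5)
Your main-term argument via Shannon sampling and Lemma \ref{vets} is a genuinely different and attractive route, and the individual steps (Plancherel--P\'olya $\sum_m|G_j(x_m)|^2\lesssim\|G_j\|_{L^2}^2$, the Minkowski exchange $\|\cdot\|_{V^r_k(l^2_j)}\le\|\cdot\|_{l^2_j(V^r_k)}$ valid because $r>2$, and the bounded overlap of the $G_j$) fit together correctly. However, the error term $E_k$ is not controlled, and the gap you acknowledge is more serious than your sketch suggests. The subtracted profile $a_{k,j}\psi(\eta-\xi_j)$ is a \emph{fixed-scale} bump, while $\hat\phi_{\omega_j(k)}$ lives on the shrinking interval $\omega_j(k)$ of length $2^k$. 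Hence for any fixed $\eta\neq\xi_j$ in $(\xi_j-1,\xi_j+1)$, once $2^k<|\eta-\xi_j|$ the error symbol reduces to $-a_{k,j}\psi(\eta-\xi_j)$, and $\sum_k|a_{k,j}|^2$ need not converge when only $\|a_{k,j}\|_{V^r_k}<\infty$ for some $r>2$ is assumed. So there is no square-function bound for $E_k$, the embedding $V^q_k\hookrightarrow l^2_k$ you would need is unavailable, and the ``$V^q_k$ upgrade'' you outline would require a second Lemma \ref{vets}-type argument on the outer-tail piece $\sum_j a_{k,j}\psi(\eta-\xi_j)$ together with a scale-$k$ analysis of the inner region -- which is precisely the part left undone.

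The paper's proof sidesteps this entirely via a different decomposition. It reduces to the case $N=1$ by a bootstrap: letting $B$ be the optimal constant and averaging the defining inequality over small translates $y$, which produces pure-frequency factors $e^{2\pi i\xi_j y}$ so that Lemma \ref{vets} applies directly (no sampling needed), at the cost of a harmless absorption term $\frac{B}{2}\|g_j\|_{L^2(l^2_j)}$. For $N=1$ the comparison is against a \emph{scale-$k$ dilate} $\hat{\tilde\phi}_{j,k}(0)\psi_k$ rather than a fixed-scale bump; the error kernel $\tilde\phi_{j,k}-\hat{\tilde\phi}_{j,k}(0)\psi_k$ then has Fourier support of width $O(2^k)$, vanishes at $0$, and admits the standard Littlewood--Paley square-function bound (using $V^r_k\le l^2_k$ for $r>2$), while the main piece is handled by L\'epingle's inequality for $\psi_k * g$ together with the $V^r$-algebra inequality $\|a_kb_k\|_{V^r_k}\le\|a_k\|_{V^r_k}\|b_k\|_{V^r_k}$. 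The structural lesson your decomposition misses is that the subtracted profile must scale with $k$ so that the error contracts in frequency; a fixed-scale $\psi$ cannot achieve this.
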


\begin{proof}First, we will use an averaging argument combined with Lemma \ref{vets} to reduce matters to the case $N=1.$ We then treat the single frequency case using L\'{e}pingle's inequality.

For the remainder of the proof, all $V^q,V^r,$ and $l^2$ norms will be restricted to the indices $k \leq 0.$ After renormalizing, we may assume that $D_1 = 1.$ For each $j,k$ let $\omega_{j,k}$ be the dyadic interval of length $2^k$ containing $\xi_j$ and let $\phi_{j,k} = \phi_{\omega_{j,k}}.$ Since each relevant $k \leq 0$ and each $\xi_{j+1} > \xi_{j} + 1$, we have
\[
\Delta_k[f](x) = \sum_{j = 1}^N \phi_{j,k}*f(x).
\]
Writing $\tilde{\phi}_{j,k}(x) = e^{-2\pi i\xi_j x}\phi_{j,k}(x)$ and $\tilde{f}_j(x) = e^{-2\pi i\xi_j x} (1_{\omega_{j,1}}\hat{f})\check{\ }(x)$ one sees that the right hand side above is equal to
\[
 \sum_{j = 1}^N e^{2\pi i \xi_{j}x} \tilde{\phi}_{j,k}*\tilde{f}_j(x).
\]

Let $B$ be the smallest constant for which the bound
\[
\|\sum_{j = 1}^N e^{2\pi i \xi_{j}x} \tilde{\phi}_{j,k}*g_j(x)\|_{L^2_x(V^q_k)} \leq B \|g_j(x)\|_{L^2_x(l^2_j)}.
\]
holds for every $g_j(x) \in L^2_x(l^2_j)$ such that $\hat{g}_j$ is supported on $[-1,1]$ for every $j$.
Since each $\hat{g}_j$ is supported on $[-1,1]$, we have
\[
\|g_j(x) - g_j(x - y) \|_{L^2_x(l^2_j)} \leq C |y| \|g_j(x)\|_{L^2_x(l^2_j)}.
\]
Averaging over small values of $y$
\begin{multline*}
\|\sum_{j=1}^N e^{2 \pi i \xi_j x} \tilde{\phi}_{j,k} * g_j(x)\|_{L^2_x(V^q_k)} \leq \\
C \|\sum_{j=1}^N  e^{2 \pi i \xi_j x} \tilde{\phi}_{j,k} * (g_j(\cdot - y))(x)\|_{L^2_x(L^2_{y \in [0,\epsilon]}(V^q_k))}
+ \frac{B}{2} \|g_j(x)\|_{L^2_x(l^2_j)}.
\end{multline*} 
Making the right hand side larger by replacing $L^2([0,\epsilon])$
by $L^2([0,1])$ and using translation invariance, the right hand side
can be estimated by
\[
C \|\sum_{j=1}^N  e^{2 \pi i \xi_j y} e^{2 \pi i \xi_j x} \tilde{\phi}_{j,k} * g_j(x)\|_{L^2_x(L^2_{y \in [0,1]}(V^q_k))}
+ \frac{B}{2} \|g_j(x)\|_{L^2_x(l^2_j)}.
\]

Applying Lemma \ref{vets}, we have
\begin{align*}
& \|\sum_{j=1}^N e^{2 \pi i \xi_j y} e^{2 \pi i \xi_j x} \tilde{\phi}_{j,k} * g_j(x)\|_{L^2_x(L^2_{y \in [0,1]}(V^q_k))} \\
\leq \ &  C N^{\left(\frac{1}{2} - \frac{1}{r}\right)\frac{q}{q-2}} \|e^{2 \pi i \xi_j x} \tilde{\phi}_{j,k} * g_j(x)\|_{L^2_x(V^r_k(l^2_j))} \\
\leq \ &C N^{\left(\frac{1}{2} - \frac{1}{r}\right)\frac{q}{q-2}} \|\tilde{\phi}_{j,k} * g_j(x)\|_{l^2_j(L^2_x(V^r_k))}\ \ .
\end{align*}
Below, we will show that for each $j$
\begin{equation} \label{caseN1}
\|\tilde{\phi}_{j,k} * g_j(x)\|_{L^2_x(V^r_k)} \leq C (D_1 + \|\hat{\phi}_{j,k}(\xi_j)\|_{V^r_k})\|g_j(x)\|_{L^2_x}
\end{equation}
from which we may conclude that
\[
B \leq C N^{\left(\frac{1}{2} - \frac{1}{r}\right)\frac{q}{q-2}} (D_1 + \sup_j\|\hat{\phi}_{j,k}(\xi_j)\|_{V^r_k})
\]
thus giving (\ref{fsbound}) after using the orthogonality of the $\tilde{f}_j.$

We now prove (\ref{caseN1}) which is the case $N=1$ of Proposition \ref{frequencyseparatedtheorem} and is similar to Lemma 3.4 of \cite{demeter09osm}.
Let $\psi$ be a Schwartz function with $\hat{\psi}$ supported on $[-1,1]$ and $\hat{\psi}(0) = 1,$ and write $\psi_k = 2^k\psi(2^k\cdot).$ It can be proven \cite{jones98oie},\cite{jones08svj} using L\'{e}pingles inequality for martingales that for every $g \in L^2$ (and here we use $r > 2$)
\begin{equation} \label{lepineq}
\|\psi_k * g(x)\|_{L^2_x(V^r_{k})} \leq C \|g\|_{L^2}\ \ .
\end{equation}
Then
\[
\|\tilde{\phi}_{j,k} * g(x)\|_{L^2_x(V^r_{k})} \leq \|\hat{\tilde{\phi}}_{j,k}(0)\psi_k * g(x)\|_{L^2_x(V^r_{k})} + \|(\tilde{\phi}_{j,k} - \hat{\tilde{\phi}}_{j,k}(0)\psi_k) * g(x)\|_{L^2_x(V^r_{k})}\ \ .
\]
Applying the inequality 
\[
\|a_kb_k\|_{V^r_k} \leq \|a_k\|_{V^r_k} \|b_k\|_{V^r_k}
\]
and (\ref{lepineq}) gives 
\[
\|\hat{\tilde{\phi}}_{j,k}(0)\psi_k * g(x)\|_{L^2_x(V^r_{k})} \leq C \|\hat{\tilde{\phi}}_{j,k}(0)\|_{V^r_k} \|g\|_{L^2} = C\|\hat{\phi}_{j,k}(\xi_j)\|_{V^r_k} \|g\|_{L^2}\ \ .
\]
Estimating $V^r$ by $l^2$ gives
\[
\|(\tilde{\phi}_{j,k} - \hat{\tilde{\phi}}_{j,k}(0)\psi_k) * g(x)\|_{L^2_x(V^r_{k})} \leq \|(\tilde{\phi}_{j,k} - \hat{\tilde{\phi}}_{j,k}(0)\psi_k) * g(x)\|_{L^2_x(l^2_{k})} \leq C \|g\|_{L^2}
\]
where the last inequality follows, in the usual way, by switching the order of integration, applying Plancherel's theorem, switching the order back, and using the fact that each $\tilde{\phi}_{j,k}- \hat{\tilde{\phi}}_{j,k}(0)\psi_k$ has mean zero and that $D_1 \leq 1.$
\end{proof}

\ \\
Finally, we will need the following variant of Proposition \ref{frequencyseparatedtheorem} involving multipliers of fixed scale.
For each $j = 1, \ldots, N$ and each $k \leq 0$, let $\varphi_{j,k}$ be a Schwartz function with $\hat{\varphi}_{j,k}$ supported on the interval $(\xi_j-\frac{1}{2},\xi_j + \frac{1}{2}).$ Let
\[
E_1 = \sup_{j,k,x} |\frac{d}{dx}\hat{\varphi}_{j,k}(x)|\ \ . 
\]
We then have

\begin{proposition} \label{nonvartheorem}
Suppose that for each $j,$ $\xi_{j+1} > \xi_j + 1$ and that $2 < r < q$. Then
\begin{equation} 
\|\sum_{j=1}^n \varphi_{j,k}*f\|_{L^2_x(V^q_{k \leq 0})} \leq C_{q,r} N^{(\frac{1}{2} - \frac{1}{r})\frac{q}{q-2}}(E_1 + \sup_{j=1, \ldots, N}\|\hat{\varphi}_{j,k}(\xi_j)\|_{V^r_{k \leq 0}}) \|f\|_{L^2}\ \ . 
\end{equation}
\end{proposition}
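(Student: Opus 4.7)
The plan is to mirror the proof of Proposition \ref{frequencyseparatedtheorem} step by step, modifying it only where the fixed Fourier scale of the multipliers $\varphi_{j,k}$ forces a change. I would first define $\tilde\varphi_{j,k}(x)=e^{-2\pi i\xi_j x}\varphi_{j,k}(x)$ and $\tilde f_j(x)=e^{-2\pi i\xi_j x}\bigl(\mathbf{1}_{(\xi_j-1/2,\xi_j+1/2)}\hat f\bigr)^{\vee}(x)$, so that both $\hat{\tilde\varphi}_{j,k}$ and $\hat{\tilde f}_j$ are supported in $(-1/2,1/2)$ and $\sum_{j=1}^N\varphi_{j,k}*f=\sum_{j=1}^N e^{2\pi i\xi_j x}\,\tilde\varphi_{j,k}*\tilde f_j$. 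The averaging step --- translating the $\tilde f_j$ by small $y$, enlarging $L^2_{y\in[0,\epsilon]}$ to $L^2_{y\in[0,1]}$, using translation invariance to move the $y$-shift out of the convolution into a modulation $e^{2\pi i\xi_j y}$, and finally invoking Lemma \ref{vets} in the $y$ variable (the separation $\xi_{j+1}>\xi_j+1$ is inherited) --- proceeds verbatim and reduces matters to the single-frequency inequality
\[
\|\tilde\varphi_{j,k}*g\|_{L^2_x(V^r_{k\le 0})}\le C\bigl(E_1+\|\hat\varphi_{j,k}(\xi_j)\|_{V^r_{k\le 0}}\bigr)\|g\|_{L^2}
\]
valid for every $g$ with $\hat g$ supported in $(-1/2,1/2)$.

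To prove this single-frequency inequality I would fix a Schwartz function $\psi$ with $\hat\psi$ supported in $(-1,1)$ and $\hat\psi\equiv 1$ on $(-1/2,1/2)$, and split
\[
\tilde\varphi_{j,k}*g=\hat\varphi_{j,k}(\xi_j)\,(\psi*g)+\bigl(\tilde\varphi_{j,k}-\hat\varphi_{j,k}(\xi_j)\psi\bigr)*g.
\]
The first piece is a $k$-dependent scalar multiple of the fixed function $\psi*g$, so its pointwise $V^r_k$-norm factors as $|\psi*g(x)|\,\|\hat\varphi_{j,k}(\xi_j)\|_{V^r_k}$, and applying $L^2_x$ together with $\|\psi*g\|_{L^2}\le C\|g\|_{L^2}$ yields the required contribution. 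The Fourier multiplier of the error piece, $m_k(\eta)=\hat{\tilde\varphi}_{j,k}(\eta)-\hat{\tilde\varphi}_{j,k}(0)\hat\psi(\eta)$, vanishes at $\eta=0$ and, by the mean value theorem applied to $\hat\varphi_{j,k}$, satisfies $|m_k(\eta)|\le E_1|\eta|$ on the support of $\hat g$.

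The main obstacle is the $L^2_x(V^r_k)$-bound for this error piece: in Proposition \ref{frequencyseparatedtheorem} the analogous multiplier was supported at the shrinking Fourier scale $2^k$, which made $V^r_k$ dominable by $\ell^2_k$ and reducible to Plancherel; here the multipliers all live at the fixed scale $1$, and the same $\ell^2_k$ shortcut diverges over the infinite range $k\le 0$. My plan is to factor $m_k(\eta)=\eta\,h_k(\eta)$ with $\|h_k\|_{L^\infty(-1/2,1/2)}\le E_1$, rewrite the error as a Fourier multiplier with symbol $h_k$ applied to $Dg$ (where $D=(2\pi i)^{-1}\partial_x$ and $\|Dg\|_{L^2}\le\tfrac12\|g\|_{L^2}$), and then handle the resulting $V^r_k$-estimate for this uniformly bounded one-parameter family of multipliers by a Rademacher--Menshov square-function argument driven by the derivative bound $E_1$. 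Once this error estimate is in hand, combining with the first step and using the Fourier orthogonality of the $\tilde f_j$ (so that $\|\tilde f_j\|_{L^2_x(\ell^2_j)}=\|f\|_{L^2}$) yields the claimed bound.
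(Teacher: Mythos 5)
Your reduction steps (modulation to $\tilde\varphi_{j,k}$ and $\tilde f_j$, the averaging step, the invocation of Lemma \ref{vets}, and the orthogonality of the $\tilde f_j$) match the paper's stated strategy, which is simply to repeat the proof of Proposition \ref{frequencyseparatedtheorem} with one substitution. You have also correctly identified the one genuinely new point: because the multipliers $\hat{\tilde\varphi}_{j,k}$ are all supported at a \emph{fixed} Fourier scale, the error multiplier $m_k(\eta)=\hat{\tilde\varphi}_{j,k}(\eta)-\hat{\tilde\varphi}_{j,k}(0)$ (on $\mathrm{supp}\,\hat g$) carries no decay in $k$, so the $V^r\lesssim \ell^2_k$ shortcut that closes the proof of (\ref{caseN1}) does not transfer. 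The paper's own one-sentence proof --- ``identical, with the $L^2$ bound for the Hardy--Littlewood maximal operator in place of L\'epingle'' --- is terse and does not visibly address this.

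However, your proposed repair does not work as stated, and this is a real gap. Writing $m_k(\eta)=\eta h_k(\eta)$ with $\|h_k\|_\infty\le E_1$ and applying $h_k(D)$ to $Dg$ leaves you with a \emph{uniformly bounded} but otherwise unstructured one-parameter family of multipliers $h_k$, and a Rademacher--Menshov square-function argument cannot produce a $V^r_{k\le 0}$ bound from uniform boundedness alone: a Rademacher--Menshov decomposition is intrinsically logarithmic in the number of scales and diverges over the infinite index set $k\le 0$, and ``driving'' it by $E_1$ does not help since $E_1$ controls only $\sup_k\|m_k\|_{C^1}$ and gives no control on differences $m_k-m_{k'}$. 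Indeed, without some $k$-regularity hypothesis on $\hat\varphi_{j,k}(\eta)$ at points $\eta\ne\xi_j$ the claimed estimate cannot hold over an infinite range: taking a single frequency $\xi_1=0$ and $\hat\varphi_{1,k}(\eta)=\chi(\eta)\bigl(1+(-1)^k\varepsilon\,\eta\bigr)$ with $\chi$ a fixed bump gives $\hat\varphi_{1,k}(\xi_1)\equiv 1$ (so the $V^r$ term in the hypothesis is trivial) and $E_1$ uniformly bounded, yet for $f$ with $\hat f$ supported away from $0$ the sequence $\varphi_{1,k}*f(x)$ alternates and has infinite $V^q_{k\le 0}$ norm pointwise. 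Thus your error estimate would need an additional ingredient that is not in your sketch. In the paper's actual application of this proposition (Section \ref{rmsection}), the index set is a finite interval $[a,b)$ and the multipliers $\hat\varphi_{j,k}$ come from a nested family $\phi_{\omega_{j,k}}$ which supplies exactly the $k$-decay and $V^r$-in-$k$ structure at every $\eta$ that is missing in the abstract statement; any complete proof of the proposition needs to exploit such structure (or a finite-range/monotonicity hypothesis), and your write-up should say explicitly how the error term's $V^r_k$ norm is controlled rather than appealing to Rademacher--Menshov over $k\le 0$.
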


The proof is identical to that of Proposition \ref{frequencyseparatedtheorem}, except that one may use the $L^2$ bound for the Hardy-Littlewood maximal operator in place of L\'{e}pingle's inequality.

\subsection{General case} \label{rmsection}

Here, we will prove the following bound, which establishes Theorem \ref{vmt} at $p=2$.

\begin{theorem} \label{rmtheorem}
Suppose $2 < r < q$. Then, 
\[
\|\Delta_k[f](x)\|_{L^2_x(V^q_k)} \leq C(1 + \log(N)) N^{(\frac{1}{2} - \frac{1}{r})\frac{q}{q-2}}(D_{1} + \sup_{j = 1, \ldots, N}\| \sum_{|\omega| = 2^k}\hat{\phi}_{\omega}(\xi_j)\|_{V^r_k}) \|f\|_{L^2}.
\] 
\end{theorem}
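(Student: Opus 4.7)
The plan is to deduce Theorem \ref{rmtheorem} from the frequency-separated estimate of Proposition \ref{frequencyseparatedtheorem} together with its fixed-scale variant Proposition \ref{nonvartheorem} via a Rademacher--Menshov type dyadic decomposition on the frequency side. The factor $1+\log N$ should arise naturally as the number of dyadic levels of the decomposition, accumulated through the triangle inequality for the $V^q$ seminorm.

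Concretely, I would reorder so that $\xi_1<\cdots<\xi_N$ and, for each $l=0,1,\ldots,L$ with $L:=\lceil\log_2 N\rceil$, introduce the partition of $\{1,\ldots,N\}$ into consecutive blocks of at most $2^l$ indices, arranged so that each level-$l$ block is the union of two level-$(l-1)$ blocks. For each scale $k\in\Z$, call a dyadic interval $\omega$ of length $2^k$ meeting $X=\{\xi_1,\ldots,\xi_N\}$ of \emph{depth} $l$ if $l$ is the smallest level for which $\omega\cap X$ is contained in a single level-$l$ block. Splitting the operator according to this depth gives a telescoping decomposition
\[
\Delta_k[f]=\sum_{l=0}^L \Delta_k^{(l)}[f],\qquad \Delta_k^{(l)}[f]=\sum_{\substack{|\omega|=2^k,\ \omega\cap X\ne\emptyset\\ \mathrm{depth}(\omega)=l}} f*\phi_\omega,
\]
and the triangle inequality reduces the theorem to uniformly bounding each $\|\Delta_k^{(l)}[f]\|_{L^2_x(V^q_k)}$ by $C\,N^{(1/2-1/r)q/(q-2)}$ times the right-hand side data.

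For fixed $l$, I would group the depth-$l$ terms by the level-$l$ block $G$ containing $\omega\cap X$, writing $\Delta_k^{(l)}=\sum_G \Delta_k^{(l),G}$. These block contributions have pairwise disjoint Fourier supports across distinct $G$, and a depth-$l$ interval can occur at scale $k$ only when $2^k$ is comparable to the diameter of $G$, so each block contributes nontrivially only on a bounded range of scales. After rescaling so that block-to-block separations exceed one, the sum over $G$ at this (essentially fixed) scale range fits the hypotheses of Proposition \ref{nonvartheorem} applied to aggregated multipliers $\hat{\varphi}_{G,k}=\sum_{\omega\subset\omega_G,\,|\omega|=2^k}\hat\phi_\omega$, whose derivative bounds are controlled by $D_1$ (giving $E_1\lesssim D_1$) and whose values at block centers are majorized by the $V^r$-quantity on the right-hand side of the theorem. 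The number of blocks at level $l$ is at most $2^{L-l}\le N$, so Proposition \ref{nonvartheorem} supplies a per-level bound of $C\,(2^{L-l})^{(1/2-1/r)q/(q-2)}\le C\,N^{(1/2-1/r)q/(q-2)}$ times the stated data.

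Summing over the $L+1$ levels yields the $(1+\log N)$ loss. The main technical obstacle I anticipate is the rigorous matching of the variational structure in $k$ with the depth restriction: the relevant scale range for each block $G$ depends on the diameter of $G$ rather than uniformly on $l$, so patching together the fixed-scale applications of Proposition \ref{nonvartheorem} across all level-$l$ blocks requires showing that the $V^q$ norm in $k$ splits correctly (using that an interval is depth-$l$ at only $O(1)$ consecutive scales per block). Verifying this scale-bookkeeping and that the aggregated multipliers $\hat{\varphi}_{G,k}$ genuinely plug into Proposition \ref{nonvartheorem} with the claimed constants is where I expect the real work to lie.
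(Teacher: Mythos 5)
Your plan organizes the dyadic (Rademacher--Menshov) decomposition on the \emph{frequency} side, indexing pieces by the ``depth'' of $\omega\cap X$ inside a binary tree of index blocks, whereas the paper's proof organizes it on the \emph{scale} side: it fixes breakpoints $a=k_0<\cdots<k_{2^M}=b$ at which $|R_k|$ changes, telescopes $f$ into orthogonal projections $f_l$ adapted to these breakpoints, and runs the dyadic summation trick over the scale index $l$ via the sets $\beta_{l,m}$. This is not a cosmetic difference, because your reduction to Proposition~\ref{nonvartheorem} relies on the claim that a depth-$l$ interval associated to a block $G$ ``can occur at scale $k$ only when $2^k$ is comparable to the diameter of $G$,'' so that each block contributes on an $O(1)$ range of scales. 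That claim fails. Already for $N=2$, $\xi_1=0$, $\xi_2=1$: for every $k\ge 1$ the dyadic interval $\omega=[0,2^k)$ satisfies $\omega\cap X=\{\xi_1,\xi_2\}$, so it has depth $1$ and is associated to the single level-$1$ block for \emph{all} $k\ge 1$. In general the range of scales at which $G$ is the active block is $[\log_2 d_G,\log_2 D_G)$, where $d_G$ is the gap between the two level-$(l-1)$ children of $G$ and $D_G$ is the distance from $G$ to the nearest external point of $X$; these can differ by an arbitrary factor. Consequently the supports of the multipliers $\hat\phi_\omega$ attached to a fixed block grow without bound in $k$, and no single rescaling puts them into windows $(\tilde\xi_G-\tfrac12,\tilde\xi_G+\tfrac12)$ as Proposition~\ref{nonvartheorem} requires of the $\hat\varphi_{j,k}$.

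The paper avoids this by never fixing a frequency block across an uncontrolled range of scales. For instance, in the $f_{-1}$ term the input function already has Fourier support in windows of width $2^{k_0}$ (one per interval of $R_{k_0}$), so the multiplier $\hat\phi_{\omega_{j,k}}$ can be truncated by a bump $\psi((\cdot-\tilde\xi_j)/2^{k_0})$ at that \emph{fixed} scale without changing $\Delta_k[\tilde f_{-1}]$; after rescaling by $2^{k_0}$ the truncated multipliers have $O(1)$-width supports and Proposition~\ref{nonvartheorem} applies. The $\log N$ loss then comes from the Rademacher--Menshov partition $\beta_{l(k),m}$ of the scale index, with orthogonality of the $f_l$ giving the square-sum control. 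To repair your approach you would need to restrict, at each depth-$l$ block $G$ and each scale in $[\log_2 d_G,\log_2 D_G)$, to the narrow frequency band of $f$ actually seen there and then re-aggregate those bands with a scale-side bookkeeping of exactly the sort the paper performs; at that point you have essentially re-derived the paper's decomposition, and the ``bounded range of scales'' premise has done no work.
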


\begin{proof} By inequality (\ref{mml2bound}) it suffices to prove the bound with the $\tilde{V}^q$ norm in place of the $V^q$ norm. Using monotone convergence, we may replace the $\tilde{V}^q$ norm by the $\tilde{V}^q_{k \in [a,b)}$ norm, where $[a,b)$ is an arbitrary finite interval of integers, and the constant is independent of $[a,b).$ For the remainder of the proof, we will usually supress $k \in [a,b)$ from the notation. 

For each $k$, let $R_k$ be the set of dyadic intervals of length $2^k$ which have nonempty intersection with $X = \{\xi_1, \ldots, \xi_N\}.$
Choosing $M$ so that $2^{M-1} < N \leq 2^M,$ we can find (after possibly enlarging $[a,b)$) a sequence $a = k_0 < \ldots < k_{2^M}=b$ so that $|R_k|$ is constant on each interval $[k_l,k_{l+1}).$

We write 
\[
f_{-1} = \sum_{\omega \in R_{k_0}} (1_{\omega}\hat{f})\check{\ }
\]
and
\[
f_l = \sum_{\omega \in R_{k_{l+1}}} (1_{\omega}\hat{f})\check{\ } - \sum_{\omega \in R_{k_{l}}} (1_{\omega}\hat{f})\check{\ }
\]
for  integers $l \in [0,2^M)$ so that the $f_l$ are orthogonal projections of $f$ and for each relevant $k$, $\Delta_k[f] = \Delta_k[\sum_{l = -1}^{l(k)} f_l]$ where $l(k)$ is the unique integer satisfying $k \in [k_{l(k)},k_{l(k)+1}).$

We then estimate
\begin{multline} \label{rmthreeterms}
\|\Delta_k[f](x)\|_{L^2_x(\tilde{V}^q_{k})} \leq \\
\|\Delta_k[f_{l(k)}](x)\|_{L^2_x(\tilde{V}^q_{k})} + \|\Delta_k[f_{-1}](x)\|_{L^2_x(\tilde{V}^q_{k})} + \|\Delta_k[\sum_{l \in [0,l(k))}f_l](x)\|_{L^2_x(\tilde{V}^q_{k})}.
\end{multline}

To bound $\|\Delta_k[f_{l(k)}](x)\|_{L^2_x(\tilde{V}^q_{k})},$
we first observe that for any function $g$ on $[a,b)$ we have
\[
\|g(k)\|_{\tilde{V}^q_{k \in [a,b)}} \leq C\left(\sum_{l = 0}^{2^M-1}\|g(k)\|_{V^q_{k \in [k_{l},k_{l+1})}}^2 \right)^{\frac{1}{2}}.
\]
Thus
\begin{equation} \label{orthogrm}
\|\Delta_k[f_{l(k)}](x)\|_{L^2_x(\tilde{V}^q_{k})} \leq 
C \left(\sum_{l = 0}^{2^M-1}\|\Delta_{k}[f_{l}](x)\|_{L^2_x(V^q_{k \in [k_{l},k_{l+1})})}^2 \right)^{\frac{1}{2}}.
\end{equation}
Breaking up each $R_{k_{l+1}-1}$ into two collections of intervals (and thus decomposing each $\Delta_{k}, k \in [k_{l},k_{l+1})$ into the sum of two operators) each of whose members are separated by distance $2^{k_{l+1}-1}$, one may use scale invariance to apply the bound from Proposition \ref{frequencyseparatedtheorem} for each $l$, obtaining
\[
\|\Delta_{k}[f_{l}](x)\|_{L^2_x(V^q_{k \in [k_{l},k_{l+1})})} \leq C_{q,r} N^{(\frac{1}{2} - \frac{1}{r})\frac{q}{q-2}}(D_1 + \sup_{j}\|\sum_{|\omega| = 2^k} \hat{\phi}_{\omega}(\xi_{j})\|_{V^r_{k}}) \|f_{l}\|_{L^2}
\]
One then uses orthogonality to see that the right hand side of (\ref{orthogrm}) is 
\[
\leq C N^{(\frac{1}{2} - \frac{1}{r})\frac{q}{q-2}}(D_1 + \sup_j\|\sum_{|\omega| = 2^k}\hat{\phi}_{\omega}(\xi_j)\|_{V^r_k})\|f\|_{L^2}.
\] 

We now consider the $f_{-1}$ term on the right hand side of (\ref{rmthreeterms}).
Here, we break up $R_{k_0}$ into four collections of intervals each of whose members are separated by distance $3\cdot 2^{k_{0}}$ and thus decompose $f_{-1}$ into the sum of four functions. 
Let $\tilde{f}_{-1}$ be one of these functions with associated intervals $\tilde{R}_{k_0}$ which we enumerate $\omega_{1}, \omega_2, \ldots$. Denote (say) the minimal element of $X \cap \omega_j$ as $\tilde{\xi}_j.$ We then set 
\[
\varphi_{j,k} = (\hat{\phi}_{\omega_{j,k}} \psi((\cdot - \tilde{\xi}_j)/2^{k_{0}}))\check{\ }
\]
where $\psi$ is a Schwartz function equal to $1$ on $[-1,1]$ and supported on $[-1.01,1.01]$, and where $\omega_{j,k}$ is the dyadic interval of length $2^k$ containing $\tilde{\xi_j}.$ This gives
\[
\Delta_{k}[\tilde{f}_{-1}] = \sum_{j} \varphi_{j,k}*\tilde{f}_{-1}
\]
for $k \in [a,b).$  
One may then use scale invariance to apply Proposition \ref{nonvartheorem}, thus obtaining 
\begin{align*}
\|\Delta_k[\tilde{f}_{-1}](x)\|_{L^2_x(\tilde{V}^q_{k})} &\leq C N^{(\frac{1}{2} - \frac{1}{r})\frac{q}{q-2}} (D_1 + \sup_j\|\sum_{|\omega| = 2^k}\hat{\phi}_{\omega}(\xi_j)\|_{V^r_k})\|\tilde{f}_{-1}\|_{L^2} \\
&\leq C N^{(\frac{1}{2} - \frac{1}{r})\frac{q}{q-2}} (D_1 + \sup_j\|\sum_{|\omega| = 2^k}\hat{\phi}_{\omega}(\xi_j)\|_{V^r_k})\|f\|_{L^2}.
\end{align*}

It remains to bound the last term on the right hand side of (\ref{rmthreeterms}). We need to show that
\begin{multline*}
\|\Delta_k[\sum_{l \in [0,l(k))}f_l](x)\|_{L^2_x(\tilde{V}^q_{k})} \\ \leq C (1 + \log(N)) N^{(\frac{1}{2} - \frac{1}{r})\frac{q}{q-2}} \sup_j(D_1 + \|\sum_{|\omega| = 2^k}\hat{\phi}_{\omega}(\xi_j)\|_{V^r_k})\|f\|_{L^2}.
\end{multline*}
For each $m = 0,\ldots,M-1$ and integers $l \in [0,2^M)$ let $\beta_{l,m} = \emptyset$ if $l$ is contained in the left child of the dyadic interval of length $2^{m+1}$ containing $l$, and otherwise (i.e. if $l$ is contained in the right child) let $\beta_{l,m}$ be the left child of the dyadic interval of length $2^{m+1}$ containing $l$. One may then check that
\[
[0,l) = \bigcup_{m=0}^{M-1} \beta_{l,m}
\] 
and that this union is disjoint.

We then have
\[
\|\Delta_k[\sum_{l \in [0,l(k))}f_l](x)\|_{L^2_x(\tilde{V}^q_k)} \leq \sum_{m = 0}^{M-1} \|\Delta_k[\sum_{l \in \beta_{l(k),m}}f_l](x)\|_{L^2_x(\tilde{V}^q_{k})}.
\]
For each $m$ in the sum above we have
\begin{multline} \label{sumoverdyadl}
\|\Delta_k[\sum_{l \in \beta_{l(k),m}}f_l](x)\|_{L^2_x(\tilde{V}^q_{k \in [a,b)})} \leq \\ C \left(\sum_{n = 0}^{2^{M-m} - 1} \|\Delta_k[\sum_{l \in \beta_{l(k),m}}f_l](x)\|^2_{L^2_x(V^q_{k \in [k_{n2^m},k_{(n+1)2^m}) })}\right)^{1/2}.
\end{multline}
If $n$ is even then $[n2^m,(n+1)2^m)$ is the left child of it's dyadic parent, and so $\beta_{l(k),m} = \emptyset$ for $k \in [k_{n2^m},k_{(n+1)2^m}).$ If $n$ is odd, then $\beta_{l(k),m} = [(n-1)2^m,n2^m)$ for $k \in [k_{n2^m},k_{(n+1)2^m}).$  One may then apply Proposition \ref{nonvartheorem} in the same manner as for the $f_{-1}$ term to see that 
\begin{multline*}
\|\Delta_k[\sum_{l \in \beta_{l(k),m}}f_l](x)\|_{L^2_x(V^q_{k \in [k_{n2^m},k_{(n+1)2^m}) })}\\ \leq C N^{(\frac{1}{2} - \frac{1}{r})\frac{q}{q-2}} (D_1 + \sup_j\|\sum_{|\omega| = 2^k}\hat{\phi}_{\omega}(\xi_j)\|_{V^r_k})\|\sum_{l \in [(n-1)2^m,n2^m)}f_l\|_{L^2}.
\end{multline*}
Thus, using orthogonality, the left side of (\ref{sumoverdyadl}) is
\[
\leq C N^{(\frac{1}{2} - \frac{1}{r})\frac{q}{q-2}} (D_1 + \sup_j\|\sum_{|\omega| = 2^k}\hat{\phi}_{\omega}(\xi_j)\|_{V^r_k})\|f\|_{L^2}\ \ .
\]
Summing over $m$ loses an additional factor of $M \leq 1 + \log(N)$, giving the desired bound. 
\end{proof}

\section{The $L^p$ bound} \label{czsection}

Letting
\[
\calV[f](x) = \|\sum_{|\omega| = 2^k} \phi_{\omega} * f(x)\|_{V^q_k} 
\]
and 
\[
A_M = (1 + \log(N)) N^{(\frac{1}{2} - \frac{1}{r})\frac{q}{q-2}}(D_{M} + \sup_{j = 1, \ldots, N}\| \sum_{|\omega| = 2^k}\hat{\phi}_{\omega}(\xi_j)\|_{V^r_k}), 
\]
we aim to prove the weak-type estimate below, from which Theorem \ref{vmt} will follow by interpolation with Theorem \ref{rmtheorem}
\begin{theorem} \label{RWT11} Suppose $2 < r < q.$ Then, there exists an $M$, depending only on $q,r,$ such that
for all $f\in L^1$ and $\lambda> 0$
$$|\{x: \calV[f](x) >\lambda\}|\leq C N^{1/2} A_{M} \|f\|_{L^1} \lambda^{-1}.$$
\end{theorem}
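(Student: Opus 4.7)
The plan is to apply the multi-frequency Calder\'on-Zygmund decomposition of Theorem \ref{cztheorem} to $f$, using the frequencies $\xi_1,\ldots,\xi_N$ that define $\calV$, at level $\mu := \lambda / A_M$, producing $f = g + \sum_{I\in\I} b_I$. The level set $\{x : \calV[f](x) > \lambda\}$ is covered by the three pieces $\{\calV[g] > \lambda/2\}$, $\bigcup_{I\in\I} 3I$, and $\{x \notin \bigcup 3I : \calV[b](x) > \lambda/2\}$. The first is bounded via Theorem \ref{rmtheorem}, Chebyshev, and (\ref{gbound}): $|\{\calV[g] > \lambda/2\}| \le C A_M^2 \|g\|_2^2/\lambda^2 \le C A_M^2 N^{1/2}\mu\|f\|_1/\lambda^2 = C N^{1/2} A_M \|f\|_1/\lambda$. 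The second is bounded via the covering bound of Theorem \ref{cztheorem}: $|\bigcup 3I| \le C\sum_I |I| \le C N^{1/2} \|f\|_1/\mu = C N^{1/2} A_M \|f\|_1/\lambda$. The balance of these forces the choice $\mu = \lambda/A_M$.

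For the third piece, Chebyshev reduces the problem to proving
\[
\sum_{I\in\I} \int_{(3I)^c} \calV[b_I](x)\,dx \le C N^{1/2} A_M \|f\|_1.
\]
For this, the cancellation condition (\ref{meanzero}) plays the role of the classical mean-zero property. For each dyadic $\omega$ of length $2^k$ meeting $X$, pick $\xi_{j(\omega)} \in \omega \cap X$, and introduce the low-frequency filter $\tilde\phi_\omega(z) := e^{-2\pi i \xi_{j(\omega)} z}\phi_\omega(z)$ (bandlimited to $[-2^k, 2^k]$, with smoothness controlled by $D_M$) together with the modulated bad piece $g_{I,j}(y) := e^{-2\pi i \xi_j y}b_I(y)$, which has vanishing integral by (\ref{meanzero}). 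Then
\[
\phi_\omega * b_I(x) = e^{2\pi i \xi_{j(\omega)} x}\bigl(\tilde\phi_\omega * g_{I,j(\omega)}\bigr)(x),
\]
and a first-order Taylor expansion of $\tilde\phi_\omega$ around $x - c_I$ (where $c_I$ denotes the center of $I$) combined with the mean-zero condition yields the kernel-type decay
\[
|\phi_\omega * b_I(x)| \le C D_M |I| \|b_I\|_1 \cdot 2^{2k}(1+ 2^k\,\dis(x,I))^{-M},\qquad x \notin 3I,
\]
for $M$ as large as needed (depending on $q, r$).

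The main obstacle is to assemble these per-$\omega$ estimates into the sharp $L^1$ bound above with the correct $N$-dependence. A naive triangle inequality in $\omega$ (at most $N$ terms per scale), $V^q \le \ell^1_k$ telescoping in $k$, together with the bound $\sum_I \|b_I\|_1 \le C N \|f\|_1$ (derived from (\ref{fibound}), (\ref{fibibound}) and $\sum_I |I| \le C N^{1/2}\|f\|_1/\mu$), would yield only $C N^2 D_M\|f\|_1$, losing a factor of $N^{3/2}$ relative to the target. The required savings come from exploiting the $L^2$-orthogonality of $\{\phi_\omega\}_{|\omega|=2^k}$ coming from their disjoint Fourier supports, which combined with the first-order vanishing of $\hat b_I$ at each $\xi_j$ gives $\|\Delta_k[b_I]\|_2^2 \le C N_k D_0^2 |I|^2 \|b_I\|_1^2 \cdot 2^{3k}$ at each scale; mixing this $L^2$ bound with the pointwise Taylor decay via a Whitney-type decomposition of $(3I)^c$ into annuli---using the $L^2$-orthogonality bound on annuli close to $I$ and the pointwise bound on distant annuli with the crossover optimized in terms of $N_k$---and then summing carefully in $k$ and $I$ produces the desired $N^{1/2} A_M$ estimate.
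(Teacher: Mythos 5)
Your reduction steps match the paper: apply Theorem \ref{cztheorem} (with $\lambda$ renormalized so $A_M=1$), split the level set into the exceptional set, the good part (handled by Chebyshev, Theorem \ref{rmtheorem}, and (\ref{gbound})), and the bad part, reducing to $\int_{(3I)^c}\calV[b_I]\,dx \lesssim |I|\lambda$ per interval. Your two basic per-scale ingredients — the pointwise Taylor decay coming from (\ref{meanzero}), and the $L^2$-orthogonality bound $\|\Delta_k[b_I]\|_2 \lesssim N_k^{1/2} D_0\, 2^{3k/2} |I| \|b_I\|_1$ — are both correct. However, the final assembly does not close, and the reason is that you never split $b_I = f_I - g_I$. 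Since $g_I=f_I-b_I$ is only controlled in $L^2$ (by (\ref{fibibound})), the best one can say is $\|b_I\|_1 \lesssim |I|\lambda N^{1/2}$ (via Cauchy--Schwarz on $g_I$), and even using the sharper bound $\|g_I\|_2\lesssim |I|^{1/2}\lambda$ that the proof of Theorem \ref{cztheorem} actually produces, one still only gets $\|b_I\|_1 \lesssim |I|\lambda$. Feeding either bound into your $L^2$ estimate and optimizing the Whitney crossover against the pointwise tail, the scale-$k$ contribution for $2^k< N^{\epsilon}$ sums to about $\lambda N^{1/2 - 3\epsilon/2 + O(\epsilon^2)}$ (with $\epsilon \sim (\tfrac12-\tfrac1r)\tfrac{q}{q-2}$ the gain from $D_M \le N^{-3\epsilon}$). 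For $r$ near $2$ or $q$ large, $\epsilon$ is tiny, so this exceeds the target by roughly $N^{1/2}$; the Whitney optimization you describe cannot recover this because the $N_k^{1/2}\|b_I\|_1$ factor is already baked into the $L^2$ estimate.

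The paper's proof bridges exactly this gap by writing $\sum_\omega \phi_\omega * b_I = \sum_\omega T_\omega[f_I] - \sum_\omega T_\omega[g_I]$ with $T_\omega[h] = \phi_\omega*h - \phi_\omega\int_{3I} e^{-2\pi i\xi_\omega y}h(y)\,dy$, so that \emph{both} summands carry cancellation. The $f_I$ sum is handled with your $L^2$-orthogonality argument, but with $\|f_I\|_1 \lesssim |I|\lambda N^{-1/2}$, whose $N^{-1/2}$ cancels the $N_k^{1/2}$; the $g_I$ sum is handled not through any $L^1$ bound but with the $L^2\to L^2$ operator norm of $\sum_\omega T_\omega$, bounded via an almost-orthogonality/Schur argument applied to the kernels of $T_\omega T_{\omega'}^*$ (which decay like $|\xi_\omega - \xi_{\omega'}|^{-1}$), picking up only a $\log N$ rather than an $N^{1/2}$ over the diagonal term. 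This split, and especially the Cotlar--Stein--type step for $g_I$, is the essential missing idea in your proposal.
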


\begin{proof}[Proof of Theorem \ref{RWT11}]
Let $f\in L^1(\R)$. After renormalizing the $\phi_\omega,$ we may assume that $A_{M} = 1$, where $M$ will be determined later.  
Applying Theorem \ref{cztheorem} und using the notation there we set
$$E=\bigcup_{I\in \I} 5I$$
and obtain 
\begin{align*}
|\{x: \calV[f]>\lambda\}| &\le 
|E|  + |\{x: \calV[g](x) >\lambda /2\}|+ |\{x\in E^c: \calV[b](x)> \lambda/2\}|\\
&\le C N^{1/2} \|f\|_{L^1}/\lambda + C A_{M}^2\|g\|_{L^2}^2/\lambda^2 + C\|\calV[b]\|_{L^1(E^c)}/\lambda\\
&\le C N^{1/2} \|f\|_{L^1}/\lambda  + C \sum_I \|\calV[b_I]\|_{L^1((5I)^c)}/\lambda\ \ .
\end{align*}
Hence it remains to show that that for every $I\in \I$ we have
$$\|\calV[b_I]\|_{L^1((5I)^c))}\le C|I|\lambda\ \ . $$
By translation and dilation, assume without loss of generality that $I=[-1/2,1/2)$.
We have
\begin{align*}
\|\calV[b_I]\|_{L^1((5I)^c)} &=
\|\|\sum_{\substack{|\omega|=2^k \\ \omega\cap X\neq \emptyset}} 
 \phi_{\omega} * b_I(x)\|_{V^q_k}\|_{L^1_x((5I)^c)}\\ 
&\le C \sum_k \| \sum_{\substack{|\omega|=2^k \\ \omega\cap X\neq \emptyset}} 
 \phi_{\omega} * b_I \|_{L^1((5I)^c)}\ \ .
\end{align*}
We shall estimate each term separately in the sum over $k$. 

Let $\epsilon = \min(\frac{1}{2},\frac{1}{3}(\frac{1}{2} - \frac{1}{r})\frac{q}{q-2}).$ First consider $2^k<N^\epsilon$. We then estimate ($\sum_\omega$ will always denote the
sum over the collection of intervals ${|\omega|=2^k,\omega\cap X\neq \emptyset}$):

\begin{align} \label{nearfar}
& \|  \sum_\omega \phi_{\omega} * b_I \|_{L^1((5I)^c)} \nonumber \\ \le & \| \sum_\omega \phi_{\omega} * b_I(x)\|_{L^1(N^\epsilon 2^{-k(1+\epsilon)} I)} 
+ \| \sum_\omega \phi_{\omega} * b_I\|_{L^1((N^\epsilon 2^{-k(1+\epsilon)}I)^c)} \nonumber \\
\leq &
N^{\epsilon/2}2^{-k(1 + \epsilon)/2}
\|\sum_\omega \phi_{\omega} * b_I \|_{L^2} 
+ \| \sum_\omega \phi_{\omega} * b_I \|_{L^1((N^\epsilon 2^{-k(1+\epsilon)} I)^c)} \ \ .
\end{align}

We will first estimate the $L^2$ norm on the right-hand-side of (\ref{nearfar}).
For each $\omega$ with $X\cap\omega\neq \emptyset$ let $\xi_\omega$
denote the minimal element in $X\cap \omega$. Moreover denote
$$\tilde{\phi}_{\omega}(x)=\phi_{\omega}(x) e^{-2 \pi i\xi_\omega x}$$
Using the cancellation property, we have
\[
\int_{3I} 
b_I (y) {\phi}_{\omega}(x-y) \, dy 
=\int_{3I} 
b_I (y) [\tilde{\phi}_{\omega}(x-y)-\tilde{\phi}_{\omega}(x)] e^{2 \pi i\xi_\omega (x-y)}\, dy\ \ .
\]
Thus, writing 
\[
T_{\omega}[f](x) = \phi_{\omega}*f(x) - \phi_{\omega}(x) \int_{3I} e^{-2\pi i \xi_{\omega} y} f(y)\ dy
\]
we have
\[
\sum_{\omega} \phi_{\omega}*b_I = \sum_{\omega} T_{\omega}[f_I] - \sum_{\omega}T_{\omega}[g_I]\ \ .
\]
We will estimate the $L^2$ norms of two terms above separately.

Since the Fourier transforms of the $\phi_\omega$ are disjointly supported, we can estimate
\begin{equation} \label{orthdint}
\| \sum_{\omega} T_{\omega}[f_I] \|_{L^2} 
\leq 
(\sum_\omega  \| T_{\omega}[f_I] \|_{L^2}^2)^{1/2}.
\end{equation}
We have
\begin{align*}
|T_{\omega}[f_I](x)| &= |\int_{3I} f_I (y) [\tilde{\phi}_{\omega}(x-y)-\tilde{\phi}_{\omega}(x)] e^{2 \pi i\xi_\omega (x-y)}\, dy| \\
&\le \|f_I\|_{L^1} \sup_{y\in 3I} |\tilde{\phi}_{\omega}(x-y)-\tilde{\phi}_{\omega}(x)|\\
&\le C (N^{-1/2}|I|\lambda) 2^{2k} D_1 (1+\min(2^k,1) |x|)^{-1} \\
& \le C (N^{-1/2}|I|\lambda) 2^{2k} N^{-3\epsilon} (1+\min(2^k,1) |x|)^{-1}
\end{align*}
and hence 
\[ 
\|T_{\omega}[f_I](x)\|_{L^2} 
\le C (N^{-1/2}  |I|\lambda) 2^{3k/2} N^{-5 \epsilon/2}. 
\]
Above, we have used the normalization $A_M \leq 1$ to replace $D_1$ by $N^{-3\epsilon}.$
Finally, since there are at most $N$ terms in the sum over $\omega$, (\ref{orthdint}) gives
\[
\|\sum_{\omega} T_{\omega}[f_I]\|_{L^2}
\le C |I|\lambda 2^{3k/2} N^{-5\epsilon/2}.
\]
This estimate can be used for the $f_I$ part of the first term in (\ref{nearfar}) and
upon adding over $2^k<N^\epsilon$ results in the desired bound
for this part of the sum.

For the $g_I$ term, we have a worse bound on the $L^1$ norm, and thus cannot
use the same estimate. On the other hand $g_I$ is in $L^2$, so we can employ 
Hilbert space techniques.

Considering the $T_\omega$ as maps from $L^2(3I)$ to $L^2(\rea)$ and using the fact that the Fourier transforms of the $\phi_{\omega}$ are disjointly supported, we see that the ranges of $T_\omega$ are
pairwise orthogonal. Let $h$ be a function of norm $1$ such that $\|T^*h\|$ is within a 
factor of two of being maximal, and let $h_\omega$ be the orthogonal projection of $h$ onto the 
range of $T_\omega$ so that $\sum_\omega\|h_\omega\|^2\le \|h\|_2^2$. Then we have
\begin{align}\label{schur}
\|\sum_\omega T_\omega\|^2=\|\sum_\omega T^*_\omega\|^2
\le & 4\sum_{\omega,\omega'} \<h, T_\omega, T^*_{\omega'}h\>\nonumber \\ 
= & 4\sum_{\omega,\omega'} \<h_\omega , T_\omega, T^*_{\omega'} h_{\omega'}\>\nonumber\\
\le & 4\sum_{\omega,\omega'} \|h_\omega\| \| T_\omega, T^*_{\omega'}\| \|h_{\omega'}\|\nonumber \\
\le &4\|(\|T_\omega T_{\omega'}\|)_{\omega,\omega'}\| \sum_\omega \|h_\omega\|^2\nonumber \\
\le & 4\sup_{\omega} \sum_{\omega'} \| T_\omega T^*_{\omega'}\| \ \ .
\end{align}
In the last line we have used Schur's test on the norm of the matrix 
$(\|T_\omega T_{\omega'}\|)_{\omega,\omega'}$ acting on the space $l^2(\{\omega: |\omega|=2^k,\omega\cap X\neq 0\})$, which is a consequence of interpolation
between the trivial $l^1$ and $l^\infty$ bounds.

 Reusing the $L^1(3I) \rightarrow L^2(\rea)$ bound for $T_{\omega}$ employed to estimate the $f_I$, we bound the diagonal terms
\[
\|T_{\omega}T^*_{\omega}\| \leq |I| 2^{3k} N^{-5\epsilon} \leq 2^{2k}N^{-4\epsilon}. 
\]
For the off-diagonal terms we calculate 
\[
T_\omega T^*_{\omega'}[f](x) = \int_{\rea} K(x,z) f(z)\ dz
\]
where
\[
K(x,z) = \int_{3I} [\tilde{\phi}_\omega(x-y)-\tilde{\phi}_\omega(x)]e^{2 \pi i\xi_\omega (x-y)}
[\overline{\tilde{\phi}}_{\omega'}(z-y)-\overline{\tilde{\phi}}_{\omega'}(z)] e^{-2 \pi i\xi_{\omega'} (z-y)}\, dy.
\]
The absolute value of the display above is 
$$ |\int_{3I} [\tilde{\phi}_\omega(x-y)-\tilde{\phi}_\omega(x)]
[\overline{\tilde{\phi}}_{\omega'}(z-y)-\overline{\tilde{\phi}}_{\omega'}(z)] e^{2 \pi i(\xi_{\omega'}-\xi_\omega) y}\, dy|. $$
After one partial integration, we see that this is bounded above by
\begin{multline*}
\frac 1{|\xi_\omega -\xi_{\omega'}|}
\int_{3I} |([\tilde{\phi}_\omega(x-y)-\tilde{\phi}_\omega(x)]
[\tilde{\phi}_{\omega'}(z-y)-\tilde{\phi}_{\omega'}(z)])'|dy \\
+\frac 1{|\xi_\omega -\xi_{\omega'}|}
|[\tilde{\phi}_\omega(x-r)-\tilde{\phi}_\omega(x)]
[\tilde{\phi}_{\omega'}(z-r)-\tilde{\phi}_{\omega'}(z)]| \\ 
+\frac 1{|\xi_\omega -\xi_{\omega'}|}
|[\tilde{\phi}_\omega(x-l)-\tilde{\phi}_\omega(x)]
[\tilde{\phi}_{\omega'}(z-l)-\tilde{\phi}_{\omega'}(z)]| 
\end{multline*}
when $\omega\neq \omega',$ where $r$ and $l$ are the right and left endpoint of $3I$.
This however is 
$$\leq C \frac 1{|\xi_\omega -\xi_{\omega'}|} 2^{4k}D_1^2(1+\min(2^k,1)|x|)^{-1} (1+\min(2^k,1)|z|)^{-1}\ \ . $$
Estimating the operator norm by the Hilbert-Schmidt norm shows
$$\|T_{\omega'}^*T_\omega\|\le 
C \frac 1{|\xi_\omega -\xi_{\omega'}|} N^{-5\epsilon}2^{3k}\ \ .$$

Since every dyadic interval of length $1$ contains at most one of the frequencies $\xi_\omega$,
we obtain from (\ref{schur})
\[
\|\sum_{\omega}T_{\omega}\| \leq C (1 + \log(N)) N^{-2\epsilon} 2^k.
\]  
This estimate can be used for the $g_I$ part of the
first term in (\ref{nearfar}) and upon adding over $2^k<N^\epsilon$ results in the desired bound for this part of the sum.

The second term in (\ref{nearfar}) is a bit easier: we have
\begin{align*}
& \| \sum_\omega \phi_{\omega} * b_I \|_{L^1((N^\epsilon 2^{-k(1+\epsilon)} I)^c)} \\
\leq &\  \sum_{\omega}\|\int_{3I}b_I(y) \phi_{\omega}(x - y)\ dy\|_{L^1_x((N^\epsilon 2^{-k(1+\epsilon)} I)^c)} \\
\leq &\  \sum_{\omega} \|b_I\|_{L^1} \|\sup_{y \in 3_I} |\phi_{\omega}(x - y)|\|_{L^1_x((N^\epsilon 2^{-k(1+\epsilon)} I)^c)} \\
\leq &\ C N \|b_I\|_{L^1} D_M\|(1 + 2^k \dis(x,3I))^{-M}\|_{L^1_x((N^\epsilon 2^{-k(1+\epsilon)} I)^c)}  \\
\leq &\ C N^{1 - (M+2)\epsilon}2^{k\epsilon(M-1)} \|b_I\|_{L^1}.
\end{align*}
Choosing $M$ satisfying $1/\epsilon \leq M < 1 + 1/\epsilon$, the sum over $2^k \leq N^{\epsilon}$ is
$$\le  C \lambda |I|,$$
which is the desired estimate.

Using the same method as in the previous paragraph, we estimate the terms $2^k\ge N^{\epsilon}$. This gives
\begin{align*}
\|\sum_\omega \phi_{\omega} * b_I \|_{L^1((5I)^c)} &\leq 
C N \|b_I\|_{L^1} D_M\|(1 + 2^k \dis(x,3I))^{-M}\|_{L^1_x((5I)^c)}  \\
&\leq C N^{1 - 3\epsilon} 2^{-k(M-1)} \|b_I\|_{L^1}.
\end{align*}
the sum over $2^k > N^{\epsilon}$ is again
$$\le  C \lambda |I|. $$
\end{proof}

\providecommand{\bysame}{\leavevmode\hbox to3em{\hrulefill}\thinspace}
\providecommand{\href}[2]{#2}

\end{document}